\documentclass[12pt]{amsart}

\usepackage{graphicx}

\usepackage{amsfonts}
\usepackage{amssymb}
\usepackage{amsmath}
\usepackage{latexsym}
\usepackage{amscd}
\usepackage{lscape}
\usepackage{pifont}
\usepackage{fancyhdr}
\usepackage{amsthm}
\usepackage{amsrefs}

\bibliographystyle{amsalpha}

\newtheorem{thm}{Theorem}[section]

\newtheorem{lem}[thm]{Lemma}
\newtheorem{prop}[thm]{Proposition}
{\bf}{\it}
{\bf}{\it}
\newtheorem*{riemann-mapping-theorem}{Riemann Mapping  Theorem}{\bf}{\it}
{\bf}{\it}
{\bf}{\it}
{\bf}{\it}
{\bf}{\it}
{\bf}{\it}
{\bf}{\it}
{\bf}{\it}
{\bf}{\it}
{\bf}{\it}
\newtheorem*{thurstonthm}{Thurston Theorem}{\bf}{\it}

\newenvironment{pf*}[1]{\proof[#1]}{\endproof}
\usepackage{euscript}

\newcommand{\cal}[1]{{\mathcal #1}}

\newcommand{\beq}{\begin{equation}}
\newcommand{\eeq}{\end{equation}}

\newtheorem{defn}{Definition}[section]

\theoremstyle{remark}
\newtheorem{rem}{Remark}[section]

\renewcommand{\mod}{\operatorname{mod}}




\renewcommand{\Im}{\operatorname{Im}}

\numberwithin{equation}{section}
\newcommand{\thmref}[1]{Theorem~\ref{#1}}
\newcommand{\propref}[1]{Proposition~\ref{#1}}

\newcommand{\lemref}[1]{Lemma~\ref{#1}}

\newcommand{\cM}{{\cal M}}

\newcommand{\cT}{{\cal T}}
\newcommand{\cN}{{\cal N}}

\newcommand{\cI}{{\cal I}}

\newcommand{\cS}{{\cal S}}

\newcommand{\CC}{{\mathbb C}}
\newcommand{\CCC}{{\scriptstyle\CC}}
\newcommand{\RR}{{\mathbb R}}

\newcommand{\ZZ}{{\mathbb Z}}
\newcommand{\NN}{{\mathbb N}}

\newcommand{\ignore}[1]{{}}

\setcounter{tocdepth}{1}

\begin{document}

\title[Thurston equivalence is decidable]{Thurston equivalence to a rational map is decidable}
\author{Sylvain Bonnot, Mark Braverman, Michael Yampolsky}
\date{May 9, 2010}
\maketitle
\begin{abstract}
We demonstrate that the question whether or not a given topological ramified covering map of the 2-sphere is Thurston equivalent
to a rational map is algorithmically decidable. 
\end{abstract}

\maketitle

\section{Introduction}
\label{section0}

This paper solves a long-standing open problem in one-dimensional Complex Dynamics: we show that Thurston's equivalence to a postcritically finite rational 
map is algorithmically decidable. Thurston's theorem \cite{DH} is central to the subject. In the case when a rational mapping exists, it is essentially 
unique, and the proof of the theorem \cite{DH} supplies an iterative algorithm for computing its coefficients. When, for instance, the rational mapping is 
a quadratic polynomial, the Spider algorithm of Hubbard and Schleicher \cite{HS}, computes the coefficients starting from a 
convenient combinatorial description of the branched covering. However, the algorithm will go astray if the branched covering data cannot be realized by 
a polynomial. Thus the question we answer in this work is both natural and important.

Here is the Equivalence problem we consider in this note:

\medskip
\noindent
{\bf Problem: Equivalence to a Rational Map.}
 {\sl Given a piecewise linear post-critically finite ramified covering map $f$ from $\hat \CC$ to itself, determine whether or not 
it is Thurston equivalent to a rational map. If it is equivalent, give an algorithm to compute the coefficients of the corresponding rational map
 (defined up to a conjugacy with a   M{\"o}bius map).}

\medskip

Our main result is:
\begin{thm}
\label{main thm}
The problem of equivalence to a rational map is algorithmically solvable.
\end{thm}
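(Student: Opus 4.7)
The plan is to run two semi-algorithms in parallel: one that halts with a certificate of Thurston equivalence to a rational map, and one that halts with a combinatorial Thurston obstruction. Since by Thurston's theorem exactly one of these situations occurs, the combination of the two is a decision procedure, and a positive certificate additionally yields the coefficients of the rational map.

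\textbf{Positive half.} The first semi-algorithm attempts to realize the rational map by iterating the Thurston pullback $\sigma_f$ on the Teichm\"uller space $\cT_P$ of the punctured sphere $\hat\CC\setminus P_f$, where $P_f$ is the postcritical set. Because $f$ is given as an explicit piecewise linear object, $\sigma_f$ can be implemented as a computable self-map of a computable presentation of $\cT_P$. When $f$ is Thurston equivalent to a rational map, $\sigma_f$ has a unique fixed point $\tau_*$ and the iterates $\sigma_f^n(\tau_0)$ converge to $\tau_*$ exponentially in the Teichm\"uller metric; from $\tau_*$ one reads off the coefficients of the rational map and certifies them to any prescribed accuracy.

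\textbf{Negative half.} The second semi-algorithm enumerates isotopy classes of multicurves $\Gamma$ on $\hat\CC\setminus P_f$, tests $f$-invariance by lifting each curve through $f$, computes the associated Thurston transition matrix, and halts as soon as it exhibits a $\Gamma$ whose leading eigenvalue is at least $1$. Any such $\Gamma$ is a Thurston obstruction and a verifiable certificate of non-realizability.

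\textbf{Main obstacle.} The nontrivial content is showing that the two halves together terminate in finite time. This requires an effective version of Thurston rigidity: in the non-realizable case, $\sigma_f^n(\tau_0)$ must leave every compact subset of $\cT_P$ at a computable rate, and the short simple closed geodesics that develop in the iterates must, by a quantitative version of the Selinger--Pilgrim canonical obstruction picture, assemble into a genuine Thurston obstruction after at most $N(f)$ iterations, with $N(f)$ computable from the combinatorial input. I expect this effectivization -- turning qualitative convergence or divergence of the pullback into a concrete computable bound on when a fixed point can be certified or a certifying multicurve must appear -- to be the main technical difficulty. The rest of the proof reduces to a careful implementation of the Teichm\"uller pullback within the framework of computable analysis developed in the authors' earlier work.
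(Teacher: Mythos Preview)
Your overall architecture---running two semi-algorithms in parallel, one searching for an obstruction and one for a realizing rational map---matches the paper exactly, and your negative half is essentially the paper's algorithm $A_1$. However, your positive half diverges substantially from what the paper does, and the ``main obstacle'' you identify is not the one the authors address. The paper never iterates the Thurston pullback on Teichm\"uller space at all. Instead, its algorithm $A_2$ exploits the fact (Theorem~\ref{thm-scheme}, due to Brezin--Byrne--Levy--Pilgrim--Plummer) that the rational maps realizing a given marked mapping scheme form a zero-dimensional affine variety over $\QQ$; hence there are only finitely many candidate rational maps $R_1,\ldots,R_m$, and their coefficients can be computed to arbitrary precision by a root-finding routine. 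The positive semi-algorithm then performs a brute-force search: enumerate elements of $\text{PMod}(S^2-P_f)$ (via explicit Dehn-twist generators), conjugate each $R_k$ by each such element, discretize to a PL map, and use the combinatorial isotopy test of Proposition~\ref{identify-isometry} to check directly whether the result is Thurston equivalent to $f$. If $f$ is realizable, some such triple will eventually witness it.

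This buys the paper a completely elementary positive half: no computable analysis on Teichm\"uller space, no effective contraction estimates, no Selinger--Pilgrim canonical obstruction machinery. Your proposal, by contrast, leaves a genuine gap: you need the pullback iteration to \emph{halt} with a certificate, but exponential convergence to an unknown fixed point does not by itself give a computable stopping criterion without an a priori bound on the contraction rate or on the location of the fixed point---and you have not supplied one. Note also that your worry about the non-realizable case is misplaced: termination there is handled entirely by the obstruction-enumeration half, so no effective divergence estimate for $\sigma_f$ is needed. The actual difficulty in your route is purely on the realizable side, and the paper sidesteps it by replacing convergence in Teichm\"uller space with a finite algebraic list of targets plus a countable search over the mapping class group.
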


\section{Thurston mappings}
\label{section1}
In this section we recall the basic setting of Thurston's characterization of rational functions.
\subsection{Ramified covering maps}
\label{section1.1}
Let $f:S^{2}\rightarrow S^{2}$ be an orientation-preserving branched covering map of the two-sphere. We define the \textit{postcritical set $P_{f}$} by
\[
 P_{f}:=\bigcup_{n>0}f^{\circ n}(\Omega_{f}),
\]
where $\Omega_{f}$ is the set of critical points of $f$. When the postcritical set $P_{f}$ is finite we say that $f$ is a \textit{Thurston mapping}.
\paragraph*{\bf Thurston equivalence.} 
Two Thurston maps $f$ and $g$ are \textit{Thurston equivalent} if there are homeomorphisms $\phi_{0}, \phi_{1}:S^{2}\rightarrow S^{2}$ such that
\begin{enumerate}
 \item the maps $\phi_{0}, \phi_{1}$ coincide on $P_{f}$, send $P_{f}$ to $P_{g}$ and are isotopic \text{rel } $P_{f}$; 
\item the diagram
\[
\begin{CD}
S^{2} @>\phi_{1}>> S^{2}\\
@VVfV @VVgV\\
S^{2} @>\phi_{0}>> S^{2}
\end{CD}
\]
commutes.
\end{enumerate}
 
\paragraph{\bf Orbifold of a Thurston map.} Given a Thurston map $f:S^{2} \rightarrow S^{2}$, we define a function $N_{f}:S^{2} \rightarrow \mathbb{N}\cup {\infty}$ as follows:
\[
N_{f}(x)=\begin{cases}
1& \text{if $x \notin P_{f}$},\\
\infty & \text{if $x$ is in a cycle containing a critical point},\\
\underset{f^{k}(y)=x}{\text{lcm}} \text{deg}_{y}(f^{\circ k}) & \text{ otherwise}.
\end{cases}
\]

The pair $(S^{2},N_{f})$ is called the \textit{orbifold of $f$}. 
The {\it signature} of the orbifold $(S^2,N_f)$ is the set $\{N_f(x)\text{ for }x\text{ such that }1<N_f(x)<\infty\}$. 
The \textit{Euler characteristic} of the orbifold is given by 
\[
 \chi(S^{2},N_{f}):= 2-\sum_{x \in P_{f}}\left(1-\frac{1}{N_{f}(x)}\right).
\]
One can prove that $\chi(S^{2},N_{f})\leq 0$. In the case where $\chi(S^{2},N_{f})< 0$, we say that the orbifold is \textit{hyperbolic}. Observe that most orbifolds are hyperbolic: indeed, as soon as the cardinality $\vert P_{f} \vert >4$, the orbifold is hyperbolic.

\medskip
\paragraph{\bf Thurston linear transformation.} We recall that a simple closed curve $\gamma \subset S^{2}-P_{f}$ is \textit{essential} if it does not bound a disk, is \textit{non-peripheral} if it does not bound a punctured disk.

\begin{defn}A  \textit{multicurve} $\Gamma$ on $(S^{2},P_{f})$ is a set of disjoint, nonhomotopic, essential, nonperipheral simple closed curves on $S^{2}-P_{f}$.
A multicurve $\Gamma$ is \textit{f-stable} if for every curve $\gamma \in \Gamma$, each component $\alpha$ of $f^{-1}(\gamma)$ is either trivial (meaning inessential or peripheral) or homotopic rel $P_{f}$ to an element of $\Gamma$. 

\end{defn}

To any $f$-stable multicurve is associated its \textit{Thurston linear transformation} $f_{\Gamma}:\mathbb{R}^{\Gamma}\rightarrow \mathbb{R}^{\Gamma}$, best described by the following transition matrix
\[
 M_{\gamma \delta}=\sum_{\alpha} \frac{1}{\text{deg}(f:\alpha \rightarrow \delta)}
\]
where the sum is taken over all the components $\alpha$ of $f^{-1}(\delta)$ which are isotopic rel $P_{f}$ to $\gamma$.
Since this matrix has nonnegative entries, it has a leading eigenvalue $\lambda(\Gamma)$ that is real and nonnegative (by the Perron-Frobenius theorem).
\begin{figure}
\centerline{\includegraphics[width=\textwidth]{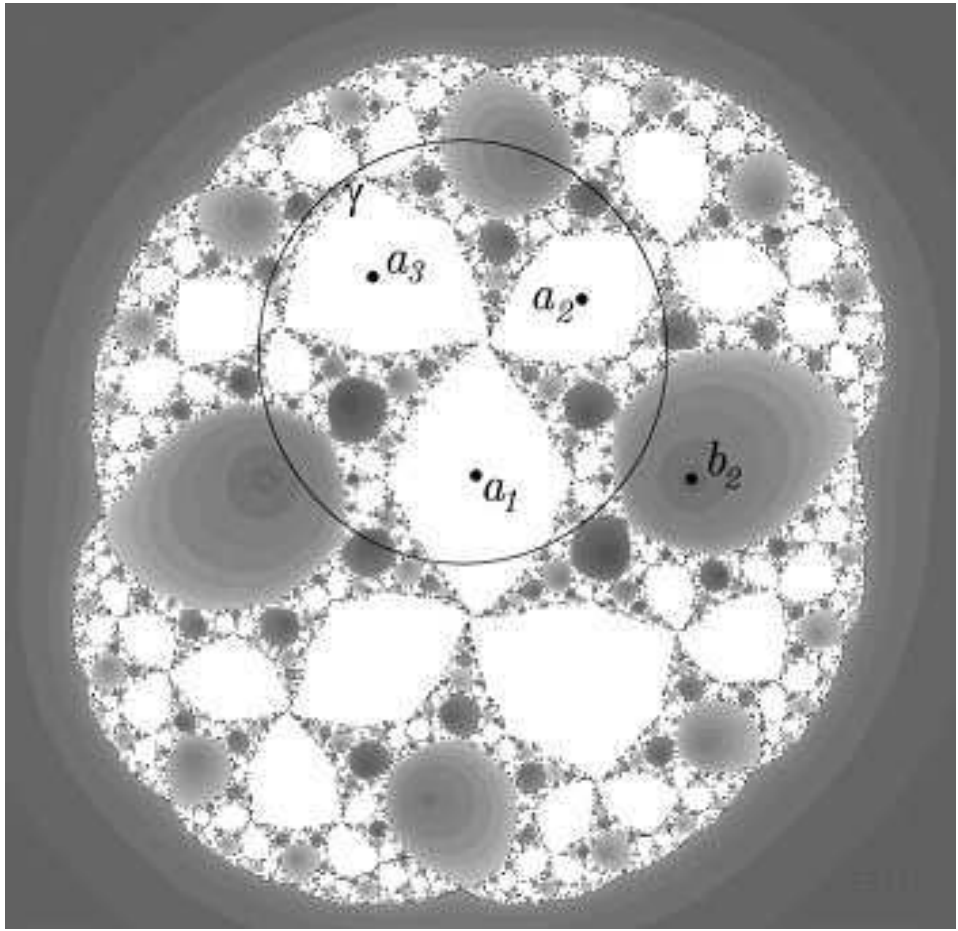}}
\caption{A stable multicurve of a quadratic rational map.\label{fig-example-gamma}}
\end{figure}

We can now state Thurston's theorem:

\begin{thurstonthm} Let $f:S^{2} \rightarrow S^{2}$ be a Thurston map
with hyperbolic orbifold. Then $f$ is Thurston equivalent to a rational function $g$ if and only if $\lambda(\Gamma)<1$ for every $f$-stable multicurve $\Gamma$. The rational function $g$ is unique up to conjugation with an automorphism of $\mathbb{P}^{1}$. 
\end{thurstonthm}

When a stable multicurve $\Gamma$ has a leading eigenvalue $\lambda(\Gamma)\geq 1$, we call it a \textit{Thurston obstruction}.

\subsection*{Several examples}
Let us first give an example of a quadratic rational map $f$ with an $f$-stable multicurve $\Gamma$.
The map is given by the formula:
$$f(z)=\frac{z^2+c}{z^2-1},\text{ with }c=\frac{1+i\sqrt{3}}{2}.$$
The picture of its Julia set is seen in Figure \ref{fig-example-gamma}, it is popularized as the cover art of the Stony Brook preprint series.
The map $f$ is known as the {\it mating} of two quadratic Julia sets: Douady's rabbit and the basilica (see e.g. \cite{Pil}).

The two critical points of $f$ are $a_1=0$ and $b_1=\infty$. Both of them are periodic:
$$a_1=0\overset{f}{\mapsto} a_2\overset{f}{\mapsto} a_3\overset{f}{\mapsto} a_1,$$
$$b_1=\infty\overset{f}{\mapsto} b_2=1\overset{f}{\mapsto} b_1.$$
Our stable multicurve $\Gamma$ consists of a single simple closed curve $\gamma$ which separates $a_i$'s from $b_i$'s.
It is easy to see that the corresponding transition matrix consists of a single entry $1/2$. Thus,
$\lambda(\Gamma)=1/2$.

To give an example of a Thurston obstruction, we will need to work a little harder. We again use the procedure known as {\it mating}.
Let us again start with Douady's rabbit polynomial, $f_c(z)=z^2+c$ 
which is the unique quadratic polynomial with $\Im c>0$ such that the critical point $0$ is peridic with period $3$. Thus, the 
postcritical set of $f_c$ is $\{w_0=0,w_1=c,w_2=c^2+c,w_3=\infty\}$. Consider also the complex conjugate,
the polynomial $f_{\bar c}$ whose postcritical set we denote 
$\{w'_0=0,w'_1=\bar c,w'_2=\bar c^2+\bar c,w'_3=\infty\}$. The {\it formal mating} of these two polynomials is the branched covering mapping of $S^2$ which is obtained as follows.
We first compactify the complex plane by adjoining a circle of directions at infinity, $\{\infty\cdot e^{2\pi i\theta}|\;\theta\in \RR/\ZZ\}$. We denote such 
compactification with the natural topology by $\textcircled{$\CCC$}$. Let us now glue two copies $\textcircled{$\CCC$}_1$, $\textcircled{$\CCC$}_2$  along the circles at infinity
using the equivalence relation $\sim_\infty$ given by 
$$\infty\cdot e^{2\pi i\theta}\in \textcircled{$\CCC$}_1 \sim_\infty \infty\cdot e^{-2\pi i\theta}\in \textcircled{$\CCC$}_2.$$
Evidently,
$$S=\textcircled{$\CCC$}_1\sqcup\textcircled{$\CCC$}_2/\sim_\infty\simeq S^2.$$
The formal mating of $f_c$ and $f_{\bar c}$ is the well-defined branched covering map $F$ of the 2-sphere $S$ which is given by
$f_c$ on $\textcircled{$\CCC$}_1$ and $f_{\bar c}$ on $\textcircled{$\CCC$}_2.$ By construction, this map has an invariant equator (the two circles at $\infty$ glued together), and its postcritical set is the union
$$\{w_0,w_1,w_2\}\cup \{w'_0,w'_1,w'_2\}.$$
An obstruction for this mapping is given by a multicurve $\Gamma$ consisting of three loops $\gamma_i$ separating $w_i$, $w'_i$ from the rest of the postcritical set
(see Figure \ref{fig-example-obstruction}). It is easy to see that $\Gamma$ is an $F$-stable multicurve, with the associated transition matrix
$$\left(\begin{array}{ccc}
0&0&1\\
1&0&0\\
0&1&0\\
\end{array}\right)$$
so that $\lambda(\Gamma)=1.$
\begin{figure}
\centerline{\includegraphics[width=\textwidth]{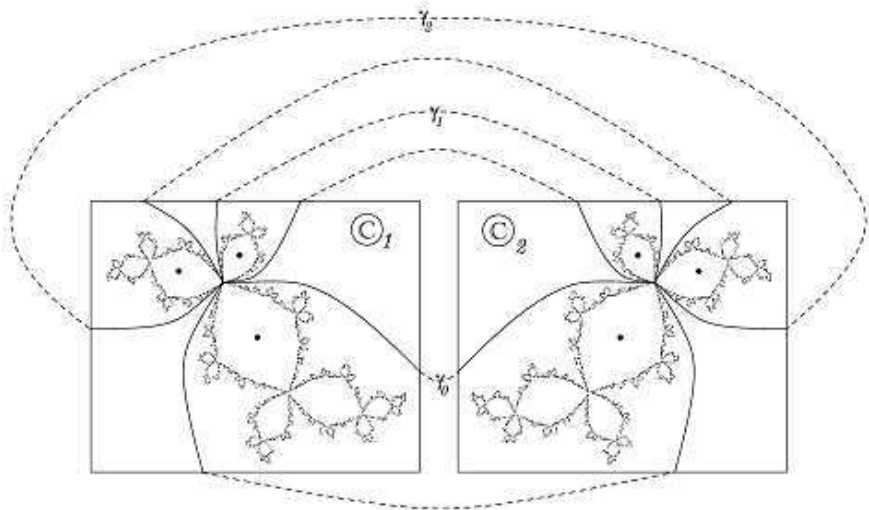}}
\caption{An example of Thurston obstruction.\label{fig-example-obstruction}}
\end{figure}

\subsection{A piecewise-linear Thurston mapping}\label{section2.1}
For the purposes of an algorithmic analysis, we will require a finite description of a branched covering $f:S^2\to S^2$.

Since we will work mainly in the piecewise linear category, it is convenient to recall here some definitions.

\paragraph{\bf Simplicial complexes} Following \cite{thu} (chapter 3.2 and 3.9) we call {\it a simplicial complex} any locally finite collection 
$\Sigma$ of simplices  satisfying the following two conditions: 
\begin{itemize}
\item a face of a simplex in $\Sigma$ is also in $\Sigma$, and 
\item the intersection of any two simplices in $\Sigma$ is either empty or a face of both. 
\end{itemize}
The union of all simplices in $\Sigma$ is the {\it polyhedron} of $\Sigma$ (written $\vert \Sigma \vert$). 

\paragraph{\bf Piecewise linear maps} A map $f:M \rightarrow N$ from a subset of an affine space into another affine space is {\it piecewise linear (PL)} if it is the restriction of a simplicial map defined on the polyhedron of some simplicial complex.

We also define {\it piecewise linear (PL) manifolds} as manifolds having an atlas where the transition maps between overlapping charts are piecewise linear homeomorphisms between open subsets of $\mathbb{R}^{n}$. It is well known that any piecewise linear manifold has a triangulation:  there is a simplicial complex $\Sigma$ together with a homeomorphism $\vert \Sigma \vert \rightarrow X$ which is assumed a PL map (see \cite{thu}, proof of theorem 3.10.2).

One example of such a manifold is the standard piecewise linear (PL) 2-sphere, which is nicely 
described in \cite{thu} as follows: pick any convex 3-dimensional polyhedron
 $K \subset \mathbb{R}^{3}$, and consider the charts corresponding to all the 
possible orthogonal projections of the boundary (topological) sphere $\partial K$ onto hyperplanes in $\mathbb{R}^{3}$. 
The manifold thus obtained is the {\it standard piecewise linear  2-sphere}. One can prove that another choice of 
polyhedron would lead to an isomorphic object (see exercise 3.9.5 in \cite{thu}).

It is known that in dimension three or lower, every topological manifold has a PL structure, and any two such structures are PL equivalent (in dimension 2, see \cite{Rad}, for the dimension 3 consult \cite{Bin}). 

\paragraph{\bf Piecewise linear branched covers.}
We begin by formulating the following proposition which describes how to lift a triangulation by a PL branched cover (see \cite{Doug},section 6.5.4):

\begin{prop}[{\bf Lifting a triangulation}]
\label{lift cover}
Let $B$ be a compact topological surface, $\pi:X\to B$ a finite ramified cover of $B$. Let $\Delta$ be the set of branch points of $\pi$, 
and let $\cT$ be a triangulation of $B$ such that $\Delta$ is a subset of vertices of $\cT$ ($\Delta\subset K_{0}(\cT)$ in the established notation).
Then there exists a triangulation $\cT'$ of $X$, unique up to a bijective change of indices, 
so that the branched covering map $\pi:X \rightarrow B$ sends vertices to vertices, edges to edges and faces to faces. Moreover, if $X=B$ is a standard PL
2-sphere and $\pi$ is PL, then
$\cT'$ can be produced constructively given a description of $\cT$.
\end{prop}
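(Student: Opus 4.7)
The plan is to exploit the hypothesis $\Delta\subset K_0(\cT)$ so that $\pi$ restricts to a genuine unramified covering over the complement of the vertex set. Setting $U := B \setminus \Delta$ and $V := X \setminus \pi^{-1}(\Delta)$, the restriction $\pi|_V : V \to U$ is an unramified covering of degree $d := \deg(\pi)$. Every open edge $e^\circ$ and every open face $\sigma^\circ$ of $\cT$ lies in $U$ and is contractible, hence lifts to exactly $d$ mutually disjoint open cells in $V$, each mapped homeomorphically onto the original by $\pi$. Together with the finite fibers $\pi^{-1}(v)$ over the vertices $v \in K_0(\cT)$, these open lifts furnish all the candidate cells of $\cT'$.

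The second step is to verify that the closures of these open lifts are honest simplices with vertices in $\pi^{-1}(K_0(\cT))$ and that they fit together into a simplicial complex. Over a non-branch vertex this is immediate since $\pi$ is a local homeomorphism at each preimage. Over a branch point $p \in \Delta$ with preimage $y$ of local degree $d_y := \deg_y(\pi)$, one uses the local model $\pi(z) = z^{d_y}$ on a disk neighborhood of $y$: each of the $k$ triangles in the star of $p$ pulls back at $y$ to $d_y$ equidistributed pie-slices, and summing $d_y k$ over the preimages of $p$ recovers the expected total of $dk$ lifts since $\sum_y d_y = d$. This guarantees that the topological open-cell lifting closes up into a well-defined fan of simplices at each $y$. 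Local finiteness, closure under taking faces, and the intersection condition then follow routinely from the covering property on $U$ together with the finiteness of fibers. Uniqueness up to a bijective re-indexing is automatic, since the connected components of $\pi^{-1}(\sigma^\circ)$ are canonically determined.

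For the constructive clause, assume that $X = B$ is a standard PL $2$-sphere and that $\pi$ is PL; then there is a finite simplicial subdivision $\cS$ of the source on which $\pi$ is affine simplex-by-simplex. The preimage $\pi^{-1}(v)$ of every vertex of $\cT$ is computed by solving finitely many linear equations in the cells of $\cS$. Each open edge of $\cT$ is traced across the cells of $\cS$ that meet its preimage, producing finitely many connected components whose endpoints are matched to the already-computed vertex preimages; faces are handled analogously. Every step reduces to elementary linear algebra and combinatorics on the incidence data of $\cS$, producing an explicit finite description of $\cT'$.

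The main obstacle is the branch-point analysis in the second step. Away from $\Delta$ the argument is standard covering-space theory, but at the preimages of branch points one must show that lifts defined only on the open cells glue continuously into closed simplices with the predicted vertex adjacencies. This is precisely where the ramified structure of $\pi$ enters, and the local $z \mapsto z^{d_y}$ model is the tool that makes the gluing rigorous; every other ingredient of the proof, including the algorithmic clause, is elementary once this local picture is in place.
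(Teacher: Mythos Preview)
The paper does not supply its own proof of this proposition: it is stated with a citation to \cite{Doug}, \S6.5.4, and the argument is taken for granted. Your proposal is the standard proof one would find in that reference, and it is correct: lift open cells over the unramified locus $B\setminus\Delta$ by ordinary covering-space theory, then use the local normal form $z\mapsto z^{d_y}$ at each ramification preimage to see that the closures of the lifted cells assemble into a bona fide simplicial complex. The constructive clause is likewise handled as one would expect, by tracing preimages of cells through the affine pieces of a simplicial representation of $\pi$. There is nothing to compare here beyond noting that you have written out what the paper leaves to the literature.
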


We consider PL maps of the standard PL 2-sphere which are topological branched coverings with a finite number of branch points. 
We call such a map a {\it piecewise linear Thurston mapping}. 

\begin{rem}
\label{representation-PL}
Note that any such covering may
be realized as a piecewise-linear branched covering map of a triangulation of $\CC$ with rational vertices. An algorithmic description of a PL branched covering
could thus either be given by the combinatorial data describing the simplicial map, or  as a collection of affine maps of triangles in $\hat\CC$ 
with rational vertices. We will alternate between these descriptions as convenient. 
\end{rem}

We note:

\begin{prop}
\label{ThPL}
Every Thurston mapping $f$ is Thurston equivalent to a PL Thurston mapping.
\end{prop}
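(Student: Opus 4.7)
My plan is to build a PL model $g$ combinatorially from $f$, realize everything on the standard PL sphere, and then read off the Thurston equivalence from the construction. Fix the standard PL structure on $S^2$. First, choose a PL triangulation $\cT$ of the target $S^2$ whose vertex set $V(\cT)$ contains $P_f$; since $f(\Omega_f)\subset P_f\subset V(\cT)$, \propref{lift cover} applies and lifts $\cT$ through $f$ to a topological triangulation $\cT'$ of the source $S^2$ with $V(\cT')=f^{-1}(V(\cT))$, such that $f$ sends every simplex of $\cT'$ homeomorphically to the corresponding simplex of $\cT$. The inclusion $f(P_f)\subset P_f\subset V(\cT)$ forces $P_f\subset V(\cT')$ as well.

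Next, realize $\cT'$ as a PL triangulation $\cT''$ of the standard PL sphere with the same combinatorial type: keep each vertex of $P_f$ at its original location, move each remaining vertex to a nearby rational point, and replace each edge by a PL arc in the same isotopy class rel its endpoints in $S^2\setminus P_f$ (after refining $\cT$ enough so that straight segments suffice). Define the PL branched covering $g:S^2\to S^2$ to be affine on each face of $\cT''$, with its combinatorial action on simplices matching that of $f$ under the identifications $\cT''\leftrightarrow\cT'\to\cT$. By construction $g$ is a PL Thurston map with $P_g=P_f$.

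Set $\phi_0=\mathrm{id}$ and define $\phi_1:S^2\to S^2$ on each simplex $\sigma\in\cT'$ by $\phi_1|_\sigma:=g^{-1}\circ f|_\sigma$, using that $g$ restricts to a homeomorphism on each simplex of $\cT''$. This prescription is consistent on shared edges (the formula depends only on the edge), produces a homeomorphism of $S^2$, satisfies $\phi_1(v)=v$ for every $v\in P_f$, and makes the diagram $\phi_0\circ f=g\circ\phi_1$ commute by construction.

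The main obstacle is to verify that $\phi_1$ is isotopic to $\mathrm{id}$ rel $P_f$, which is exactly what upgrades the data $(\phi_0,\phi_1,g)$ from a mere conjugacy to a Thurston equivalence. For this, introduce a reference homeomorphism $\phi_1^{0}$ obtained by an ambient isotopy of $S^2$ rel $P_f$ that first moves the non-$P_f$ vertices of $\cT'$ to their PL positions and then straightens each edge within its $S^2\setminus P_f$-isotopy class; visibly $\phi_1^{0}\simeq\mathrm{id}$ rel $P_f$. On each edge and each 2-simplex of $\cT'$, $\phi_1$ and $\phi_1^{0}$ are two homeomorphisms onto the same PL cell that agree on the boundary, hence are isotopic rel boundary by the Alexander trick. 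Concatenating these per-simplex isotopies (all supported in disks disjoint from $P_f$) yields $\phi_1\simeq\phi_1^{0}\simeq\mathrm{id}$ rel $P_f$, completing the Thurston equivalence between $f$ and the PL Thurston map $g$.
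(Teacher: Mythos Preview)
Your argument is correct and follows essentially the same route as the paper: start from a triangulation of the target containing $P_f$, lift it through $f$ (you invoke \propref{lift cover} explicitly, the paper does this implicitly via ``refining $\cT_1$ to $\cT_2$''), realize the result piecewise-linearly, and use Alexander's trick to control the isotopy class. Your version is considerably more explicit than the paper's four-sentence sketch---you actually write down $g$, $\phi_0$, $\phi_1$ and verify the rel-$P_f$ isotopy via the auxiliary $\phi_1^0$---but the underlying strategy is the same.
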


Before proving the above Proposition, let us formulate a basic topological fact, known as Alexander's trick:

\medskip
\noindent
{\bf Alexander's trick.}
{\it Two homeomorphisms of the closed $n$-dimensional ball, which are isotopic on the boundary, are isotopic.}

\medskip
\noindent

\begin{proof}[Proof of \propref{ThPL}]
We may start with a triangulation $\cT_1$ of $S^2$ whose vertices include the postcritical set $P_f$. 
Refining the triangulation $\cT_1$ to $\cT_2$, if necessary, we isotope $f$ to a map which leaves the vertices and the edges of $\cT_2$ invariant.
Finally, every topological 
map from a triangle to a triangle can be isotoped into a simplicial map using Alexander's trick.
We can thus further isotope our map to a PL Thurston mapping with triangulation $\cT_2$.
\end{proof}

\section{Outline of the proof of \thmref{main thm}}

The proof will rely on a construction of two explicit algorithms, $A_1$ and $A_2$, which, given a postcritically finite piecewise linear branched covering  $f:S^2\to S^2$
with a hyperbolic orbifold, perform the following tasks:
\begin{itemize}
\item[$A_1$] If $f$ has a Thurston obstruction, the algorithm $A_1$ will terminate and output the obstruction. It will not terminate otherwise.
\item[$A_2$] If $f$ is Thurston equivalent to a rational mapping $R$, then the algorithm $A_2$ will terminate. It will identify the rational mapping by outputting a ball in an
appropriate parameter space of rational maps which isolates the rational mapping $R$ from postcritically finite mappings of the same degree and with the same size of the postcritical
set. If $f$ is not equivalent to any rational mapping, then the algorithm $A_2$ will not terminate.
\end{itemize}

We further will use a polynomial root-finding algorithm $A_3$ which finds an isolated root $\bar x_*\in\RR^m$ of a system of polynomial 
equations $\{P_i(\bar x)=0\}$.
\begin{itemize}
\item[$A_3$] the input of the algorithm is: a system of polynomial equations $\{P_i(\bar x)=0\}$ for $\bar x\in\CC^m$
 (the coefficients of the polynomials $P_i$ are either given through an oracle, or computed
with an arbitrarily high precision via a given algorithm); a rational ball $B(\bar w, r)\subset \CC^m$ which contains  $\bar x_*$ and such that
$B(\bar w,2r)$ does not contain any other roots; a natural number $n$. The output is $\bar d_n\in\CC^m$ with the property $||\bar x_*-\bar d_n||<2^{-n}$. 
\end{itemize}

\begin{proof}[Proof of \thmref{main thm} assuming the existence of $A_1$ and $A_2$.]
Given a postcritically finite piecewise linear map $f$ with a hyperbolic orbifold,
we will run the two algorithms $A_1$ and $A_2$ in parallel. One and only one of them will terminate. If it is $A_1$, then
we conclude that $f$ is not equivalent to any rational map. If it is $A_2$ then we know that a Thurston equivalent rational map $R$ exists, and we are given an isolating neighborhood
for it in the parameter space. The root-finding algorithm $A_3$ can then be employed to find the coefficients of $R$ with any given precision.
\end{proof}

\section{Some topological preliminaries}
\label{section2.2}

\medskip
\paragraph{\bf Simple closed curves.}

Recall that two curves are in a {\it minimal position} if they realize the minimal number of intersections in their homotopy classes.
\begin{lem}[{\bf The Bigon Criterion}]
Two transverse simple closed curves on a surface S are in a minimal position if and only if the two arcs between any pair of intersection points do not bound an embedded disk in S. 
\end{lem}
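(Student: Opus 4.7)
The plan is to prove the two directions separately. For the easy direction, suppose an embedded bigon $D\subset S$ exists, bounded by a subarc $a\subset\alpha$ and a subarc $b\subset\beta$ sharing endpoints $p,q\in\alpha\cap\beta$. I would perform an isotopy of $\alpha$ supported in a regular neighborhood of $D$, pushing $a$ across $D$ just past $b$; this eliminates the two intersection points $p,q$ without creating new ones, producing a curve $\alpha'$ isotopic to $\alpha$ with $|\alpha'\cap\beta|=|\alpha\cap\beta|-2$, showing $\alpha$ and $\beta$ are not in minimal position.

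For the converse, I would argue by contrapositive: assuming $\alpha,\beta$ are transverse but not in minimal position, I construct an embedded bigon. The cases $S=S^2$ and $S=\TT^2$ can be dispatched directly (every simple closed curve on $S^2$ is inessential, and on the torus one compares slopes), so assume $\chi(S)<0$ and pass to the universal cover $\widetilde S$, a topological disk with circle at infinity $\partial_\infty\widetilde S$. Each lift of an essential simple closed curve is a properly embedded line whose two ends accumulate to distinct points of $\partial_\infty\widetilde S$. Two such lifts $\widetilde\alpha,\widetilde\beta$ of $\alpha,\beta$ meet in an even number of points if their endpoint pairs do not link on the circle, and an odd number if they do. The geometric intersection number $i(\alpha,\beta)$ equals, modulo the deck group, the number of pairs of lifts with linking endpoints, while $|\alpha\cap\beta|$ counts all pairs that meet geometrically. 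Since by assumption $|\alpha\cap\beta|>i(\alpha,\beta)$, some pair of lifts must intersect at least twice.

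Fixing such a pair and taking two consecutive intersection points along $\widetilde\alpha$, I obtain subarcs of $\widetilde\alpha$ and $\widetilde\beta$ whose concatenation is a Jordan curve in the disk $\widetilde S$ enclosing an embedded bigon $\widetilde D$ upstairs. The main obstacle I expect is descending to an embedded bigon downstairs, since $\pi(\widetilde D)$ is a priori only immersed in $S$. I would resolve this by an innermost-bigon argument: among all bigons in $\widetilde S$ arising this way, select one $\widetilde D_0$ whose interior is crossed by no other lift of $\alpha$ or $\beta$. Such a $\widetilde D_0$ exists because any crossing arc inside a bigon, together with a subarc of the bigon's boundary, cuts off a strictly smaller bigon, and this reduction terminates. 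A short argument then shows $\pi|_{\widetilde D_0}$ is injective: any self-overlap in $S$ would correspond to a nontrivial deck translate of $\widetilde D_0$ meeting $\widetilde D_0$, which would force another lift of $\alpha$ or $\beta$ to cross the interior of $\widetilde D_0$, contradicting minimality. Hence $\pi(\widetilde D_0)$ is the embedded bigon sought, completing the hard direction.
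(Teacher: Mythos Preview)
The paper does not prove this lemma at all; it is stated as a known result (the standard reference is Farb--Margalit, which the paper cites as \cite{FM}) and is simply invoked later in the verification of \propref{homotopy check}. Your proposal supplies a full proof where the paper gives none, and your argument is essentially the classical one: push across a bigon for the easy direction, and for the hard direction pass to the universal cover, find two lifts meeting at least twice, extract an innermost bigon upstairs, and show it embeds downstairs.

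One small remark on your write-up: the sentence ``the geometric intersection number $i(\alpha,\beta)$ equals, modulo the deck group, the number of pairs of lifts with linking endpoints'' is true but presupposes exactly what you are trying to establish (that minimal position is realized, with each linking pair contributing one crossing). A cleaner way to reach the same conclusion---that some pair of lifts meets at least twice---is to argue directly: if every pair of lifts meets in at most one point, then the total intersection number is determined by the linking pattern of endpoints on $\partial_\infty\widetilde S$, which is an isotopy invariant, so $|\alpha\cap\beta|$ would already be minimal. This avoids the apparent circularity. With that adjustment the argument is complete and matches the standard proof.
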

Let us also formulate an elementary fact:
\begin{lem}
\label{homotopy check1}
Two simple closed curves on a surface $S$ are homotopic if and only if they can be isotoped to boundary curves of an annulus.
\end{lem}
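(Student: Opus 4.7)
The statement has two directions; one is essentially tautological and the other is the content of the lemma.

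\textbf{Easy direction.} If $\gamma_0,\gamma_1$ are isotopic to the two boundary components of an embedded annulus $A\hookrightarrow S$, then parametrizing $A$ as $S^1\times[0,1]$ and sliding along the $[0,1]$-factor gives an explicit isotopy, hence a homotopy, between $\gamma_0$ and $\gamma_1$.

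\textbf{Hard direction.} Assume $\gamma_0$ and $\gamma_1$ are homotopic simple closed curves on $S$. The plan has three steps.

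First, put $\gamma_0$ and $\gamma_1$ in general position so that they meet transversely in finitely many points, and then isotope them into \emph{minimal} position. This last step is exactly what the Bigon Criterion above guarantees: if any bigon appears, one can remove two intersection points by an isotopy supported in a neighborhood of the bigon, and iterating eliminates all bigons in finitely many steps.

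Second, argue that once $\gamma_0,\gamma_1$ are in minimal position and homotopic, they must in fact be disjoint. The standard way to see this is to lift to the universal cover $\tilde S$: a homotopy between $\gamma_0$ and $\gamma_1$ lifts to give, for each lift $\tilde\gamma_0$, a lift $\tilde\gamma_1$ with the same endpoints at infinity (or, in the genus-one case, stabilized by the same deck translation). Two such lifts are disjoint simple arcs with matching endpoints, so any intersection between $\gamma_0$ and $\gamma_1$ downstairs produces, upon choosing appropriate lifts, two intersection points of $\tilde\gamma_0$ and $\tilde\gamma_1$ bounding a disk in $\tilde S$; projecting gives a bigon between $\gamma_0$ and $\gamma_1$, contradicting minimal position. (Degenerate cases where $S=S^2$, $\RR P^2$, or a disk are ruled out since simple closed curves on those surfaces are either inessential or peripheral, and the lemma is trivially true or vacuous there.)

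Third, show that two disjoint homotopic essential simple closed curves $\gamma_0,\gamma_1$ cobound an embedded annulus. Cut $S$ along $\gamma_0\cup\gamma_1$; the homotopy from $\gamma_0$ to $\gamma_1$ shows that $\gamma_0$ and $\gamma_1$ lie on the boundary of a common complementary component $C$. The two boundary circles of $C$ corresponding to $\gamma_0,\gamma_1$ are homotopic inside $C$ after gluing, and by the classification of compact surfaces the only surface with two boundary components whose inclusions induce the same conjugacy class in $\pi_1$ is the annulus (equivalently, the existence of a homotopy between the two boundary curves forces $\chi(C)=0$ and genus $0$). Hence $C$ is an annulus with $\partial C=\gamma_0\sqcup\gamma_1$, as desired.

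The main obstacle is step two: turning the homotopy of $\gamma_0$ to $\gamma_1$ into a bigon whenever they intersect. Once disjointness is established, step three is a clean application of the surface classification, and step one is exactly the Bigon Criterion stated just above.
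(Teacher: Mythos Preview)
The paper does not prove this lemma at all: it is introduced with the phrase ``Let us also formulate an elementary fact'' and is simply quoted without argument, to be used as a black box inside the algorithm of \propref{homotopy check}. So there is no ``paper's proof'' to compare with.

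Your argument is the standard textbook one (essentially the route taken in Farb--Margalit): put the curves in minimal position via the Bigon Criterion, show that homotopic curves in minimal position are disjoint by lifting to the universal cover and producing a bigon from any intersection, and finally show that disjoint homotopic essential simple closed curves cobound an annulus. This is correct in outline and is exactly how one would justify the ``elementary fact'' if asked.

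One place that deserves a little more care is your Step~3. The sentence ``the homotopy from $\gamma_0$ to $\gamma_1$ shows that $\gamma_0$ and $\gamma_1$ lie on the boundary of a common complementary component $C$'' is not immediate: the homotopy lives in $S$, not in the cut-open surface, so it does not directly tell you which piece of $S\setminus(\gamma_0\cup\gamma_1)$ to look at. Likewise, to invoke ``the only compact surface with two boundary components inducing the same conjugacy class in $\pi_1$ is the annulus'' you need that class to be computed in $\pi_1(C)$, not merely in $\pi_1(S)$. The clean way to finish is to cut $S$ along $\gamma_1$ only, observe that $\gamma_0$ sits in the resulting surface $S'$ and is there homotopic to a boundary copy of $\gamma_1$ (this uses that $\pi_1(S')\to\pi_1(S)$ is injective on the relevant classes, which follows from the lifting picture you already set up), and then quote the fact that a simple closed curve homotopic to a boundary component of a surface cobounds an annulus with it. With that adjustment the proof is complete.
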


We now prove:
\begin{prop}
\label{homotopy check}
There exists an algorithm to check whether two simple closed polygonal curves on a triangulated surface $S$ are homotopic.
\end{prop}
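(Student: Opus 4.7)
The plan is to combine the Bigon Criterion with \lemref{homotopy check1}. First, we put the two polygonal curves $\gamma_1, \gamma_2$ into transverse position: this is a local adjustment near each tangential or coincidence point, effected by a small perturbation that keeps the curves polygonal, possibly after subdividing the triangulation of $S$. After this preliminary step, $\gamma_1\cap\gamma_2$ is a finite set of transverse intersection points whose cardinality we can compute from the simplicial data.

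Second, we iteratively remove bigons until the curves are in minimal position. A bigon is a pair of intersection points $p,q$ together with subarcs $\alpha_1\subset\gamma_1$ and $\alpha_2\subset\gamma_2$ connecting them such that the closed loop $\alpha_1\cup\alpha_2$ bounds an embedded disk in $S$. There are only finitely many such pairs of subarcs; for each candidate loop we test whether it bounds a disk by refining the triangulation so that $\alpha_1\cup\alpha_2$ lies in its $1$-skeleton, cutting $S$ along the loop, and reading off the Euler characteristic and the number of boundary circles of each resulting piece from the combinatorial data. A component is a disk precisely when its Euler characteristic equals $1$ and it has exactly one boundary circle. Whenever a bigon is found we isotope one of the curves across the disk, an effective PL operation, and thereby reduce $|\gamma_1\cap\gamma_2|$ by two. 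After at most $|\gamma_1\cap\gamma_2|/2$ iterations the Bigon Criterion certifies that the curves are in minimal position.

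Finally, by \lemref{homotopy check1}, two homotopic simple closed curves in minimal position must either coincide or be disjoint and cobound an annulus. If at termination the number of intersections is positive, we output that $\gamma_1$ and $\gamma_2$ are not homotopic. Otherwise we test whether $\gamma_1$ coincides with $\gamma_2$ (a trivial combinatorial comparison in the PL setting), and if not we cut $S$ along $\gamma_1\cup\gamma_2$ and examine the resulting components: the curves are homotopic if and only if some component is a topological annulus, recognized by Euler characteristic $0$ together with the two boundary circles being precisely $\gamma_1$ and $\gamma_2$. The main obstacle is to justify that bigon detection and removal can be carried out faithfully in the PL category; this reduces to the fact that every embedded polygonal disk in a triangulated surface can be refined into a subcomplex, and that the accompanying cut-and-paste isotopies act combinatorially on the triangulation. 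Both are consequences of the standard PL surface theory already invoked in the paper.
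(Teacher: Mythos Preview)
Your proof is correct and follows essentially the same strategy as the paper: put the curves in transverse position, iteratively remove bigons until the Bigon Criterion guarantees minimal position, then invoke \lemref{homotopy check1} to decide homotopy by checking whether the resulting disjoint curves cobound an annulus. You supply more implementation detail (Euler-characteristic tests for disks and annuli, handling the coincident-curves case) than the paper's terse step-by-step description, but the underlying argument is the same.
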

\begin{proof}
The algorithm works as follows:
\begin{itemize}
\item[(I)] If necessary, isotope the curves so that all the intersections are transverse.
\item[(II)] While there exists a pair of intersection points which bounds a disk {\bf do}:
\begin{itemize}
\item[ ] push one of the curves through the disk to remove the two intersection points. {\bf end do}
\end{itemize}
\item[(III)] Does there exist an intersection point? If yes, output {\bf the curves are not homotopic} and halt. If no,
proceed to step (IV).
\item[(IV)] Do the two curves bound an annulus? If no, output {\bf the curves are not homotopic} and halt. If yes,
output {\bf the curves are homotopic} and halt.
\end{itemize}
To verify the algorithm, we note that the Bigon Criterion implies that step (II) can be performed until the curves are in a minimal position.
The correctness of the algorithm now follows by \lemref{homotopy check1}.

\end{proof}

\paragraph{\bf Maps isotopic to the identity.}

The following theorem of Ladegaillerie \cite{Lad} will be useful
to us in what follows:

\begin{thm}
\label{thm-isotopy-check}
Let $K$ be a compact topological 1-complex, X an oriented compact surface with boundary, $i_{0}, i_{1}$ two embeddings of K into the interior of X. There is an equivalence between the two following properties:
\begin{enumerate}
\item $i_{0}$ and $i_{1}$ are isotopic by an ambient isotopy of $X$ (fixed on $\partial X$)
\item $i_{0}$ and $i_{1}$ are homotopic and there is an orientation preserving homeomorphism $h:X\to X$ such that $h \circ i_{0}=i_{1}$.
\end{enumerate}
\end{thm}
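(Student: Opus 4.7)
The direction $(1)\Rightarrow (2)$ is essentially tautological: if $H_t:X\to X$ is an ambient isotopy with $H_0=\mathrm{id}$, fixed on $\partial X$, and $H_1\circ i_0 = i_1$, then $h:=H_1$ is orientation-preserving (orientation is preserved under isotopy through homeomorphisms) and satisfies $h\circ i_0 = i_1$; the required homotopy between $i_0$ and $i_1$ is just $F_t = H_t\circ i_0$. So the whole content is in the implication $(2)\Rightarrow(1)$.

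My plan for $(2)\Rightarrow(1)$ is to build the ambient isotopy in two stages, first aligning the $0$-skeleton of $K$ and then aligning the edges. Let $F_t:K\to X$ denote the homotopy from $i_0$ to $i_1$ given by hypothesis. First, for each vertex $v$ of $K$, the trace $t\mapsto F_t(v)$ is a path in $\mathrm{int}(X)$ from $i_0(v)$ to $i_1(v)$. After a small general-position perturbation, these finitely many paths may be taken to be pairwise disjoint arcs in $\mathrm{int}(X)$, and a standard disk-sliding argument (pushing small neighborhoods of the arcs by a compactly supported isotopy fixed on $\partial X$) produces an ambient isotopy $G^{(1)}_t$ of $X$ with $G^{(1)}_1\circ i_0(v) = i_1(v)$ for every vertex $v$.

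Second, replacing $i_0$ by $G^{(1)}_1\circ i_0$ and $h$ by $h\circ (G^{(1)}_1)^{-1}$, we may assume $i_0$ and $i_1$ agree on $K_0$. For each edge $e$ of $K$, the images $i_0(e)$ and $i_1(e)$ are embedded arcs in $X$ with the same endpoints, homotopic rel endpoints via $F_t|_e$. A classical surface-topology result (the bigon/innermost-disk argument, or Epstein's theorem on homotopic arcs in surfaces) says that such a pair of arcs cobound an embedded disk after some isotopy, hence are themselves ambient isotopic rel endpoints through an isotopy supported in a small neighborhood of that disk. Doing this edge-by-edge and keeping the supports disjoint from already-matched edges yields a further ambient isotopy $G^{(2)}_t$ with $G^{(2)}_1\circ G^{(1)}_1\circ i_0 = i_1$ on $K$. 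Concatenating $G^{(1)}$ and $G^{(2)}$ gives the desired ambient isotopy.

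The main obstacle is Step~2: the edge alignments must be globally consistent, not merely possible edge-by-edge. This is precisely where the orientation-preserving homeomorphism $h$ is needed. It guarantees that the \emph{combinatorial embedding data} of $i_0(K)$ in $X$ (the cyclic order of edges around each vertex, the assignment of complementary components, and the induced orientations on edges) matches that of $i_1(K)$. Without such a global matching, one could have homotopic embeddings whose edges cannot be simultaneously straightened without crossing one another or without flipping the cyclic order at a vertex. With the hypothesis on $h$ in hand, the innermost-bigon reductions can be carried out compatibly, and the resulting ambient isotopy exists. This combinatorial-matching step is the technical heart of Ladegaillerie's argument and the place where I would expect to spend the most effort in a detailed write-up.
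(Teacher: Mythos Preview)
The paper does not prove this theorem. It is stated as a result of Ladegaillerie and cited to \cite{Lad}; the authors use it as a black box in the proof of \propref{identify-isometry}. There is therefore no proof in the paper to compare your proposal against.

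Regarding your sketch on its own merits: the two-stage plan (align the $0$-skeleton, then the edges) is the natural strategy, and you are right that the global coherence of the edge moves is where the content lies and where the homeomorphism $h$ is needed. One point that deserves more care is the vertex step. Sliding $i_0(v)$ to $i_1(v)$ along an \emph{arbitrary} embedded arc will in general change the relative homotopy classes of the incident edges, so the subsequent edge step may fail; the arcs you slide along must be homotopic rel endpoints to the traces $t\mapsto F_t(v)$ coming from the given homotopy. You gesture at this by starting from those traces, but ``general-position perturbation'' to make them simultaneously embedded and pairwise disjoint is not automatic on a surface with nontrivial topology, and must be argued. Similarly, in the edge step, processing edges one at a time while keeping supports disjoint from previously matched edges is exactly the delicate part; the bigons you find may well intersect already-aligned edges. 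These are not fatal objections---they are the technical details that Ladegaillerie's paper handles---but as written your proposal is an outline rather than a proof.
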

 
We formulate the following corollary:
\begin{prop}
\label{identify-isometry}
There exists an algorithm $A$ which does the following. Given a triangulated sphere with a finite number of punctures $S=S^2-Z$ and a triangulated homeomorphism
$h:S\to S$, the algorithm identifies whether $h$ is isotopic to the identity.
\end{prop}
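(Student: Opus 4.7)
The plan is to combine Ladegaillerie's theorem (\thmref{thm-isotopy-check}) with Alexander's trick to reduce the problem to a finite number of arc-homotopy tests, each of which can be handled in the spirit of \propref{homotopy check}. As a preliminary step I would extend $h$ to a homeomorphism $\bar h\colon S^2\to S^2$ permuting the puncture set $Z$ and check whether $\bar h|_Z=\operatorname{id}$; since any ambient isotopy of $S=S^2-Z$ must fix each puncture, a nontrivial permutation immediately yields ``not isotopic to the identity.'' Assume henceforth that every puncture is fixed.

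The key reduction is to work with the $1$-skeleton $K$ of the given triangulation. I would view the two embeddings $i_0\colon K\hookrightarrow S^2$ and $i_1=h\circ i_0$ inside a compact planar surface $X$ obtained from $S^2$ by removing small open disks around the punctures (truncating edges that meet a puncture so that they end on the new boundary circles, so that $K$ lies in the interior of $X$). The orientation-preserving homeomorphism of $X$ required by \thmref{thm-isotopy-check} is furnished tautologically by $h$ itself, so ambient isotopy rel $\partial X$ of the two embeddings is equivalent to their homotopy.

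Homotopy of the two embeddings of $K$ then splits into a finite family of arc-homotopy tests: for every edge $e$ of $K$, does $h(e)$ coincide with $e$ rel endpoints up to homotopy in $S$? Concatenating $e$ with $h(e)^{-1}$ produces a closed loop in $S^2-Z$ based at an endpoint of $e$, and the test becomes whether this loop is null-homotopic. Since $\pi_1(S^2-Z)$ is a free group generated by small loops around the punctures, the null-homotopy question reduces to word reduction in that free group, where the word is read off by tracking the loop through the triangulation against a chosen spanning tree of $K$; alternatively the Bigon Criterion argument from the proof of \propref{homotopy check} can be adapted to loops with a fixed basepoint.

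Once every edge passes the homotopy test, one has an ambient isotopy of $S^2$ carrying $K$ to $h(K)$ and fixing $Z$ pointwise; composing with $h^{-1}$ produces a homeomorphism $h_1\colon S^2\to S^2$ which is the identity on $K\cup Z$. On each closed $2$-simplex of the triangulation, $h_1$ is a self-homeomorphism of a disk that is the identity on the boundary, so Alexander's trick supplies an ambient isotopy to the identity rel the boundary, and patching these isotopies on the triangles yields the desired isotopy of $h$ to the identity. The main technical obstacle is the algorithmic null-homotopy check in the third paragraph, but this is routine combinatorial topology once the triangulation is fixed, and is no harder than the closed-curve homotopy test already carried out in \propref{homotopy check}.
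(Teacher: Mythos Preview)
Your proof is correct and follows essentially the same route as the paper: reduce the isotopy question to a homotopy question for an embedded $1$-complex via Ladegaillerie's theorem, check homotopy algorithmically, and finish with Alexander's trick on the complementary disks. The only real difference is the choice of $1$-complex: the paper uses a bouquet of loops based at a single vertex $x\notin Z$ generating $\pi_1(S)$ (so the complement is a single disk and one application of Alexander's trick suffices), whereas you use the full $1$-skeleton (so you check more edges but apply Alexander's trick triangle-by-triangle). Your version is somewhat more explicit about the preliminary check that $h$ fixes each puncture and about reducing the arc-homotopy test to the word problem in a free group; the paper glosses over both of these points.
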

\begin{proof}
Let $x\notin Z$ be a vertex in the triangulation $\cT$. Consider a collection of closed loops $\gamma_i$ in $\partial \cT$ passing through the basepoint $x$
such that $\{\gamma_i\}$ forms a basis of $\pi_1(S)$ (refine the triangulation, if necessary). By \thmref{thm-isotopy-check}, it is sufficient to verify that
$h(\gamma_i)$ is homotopic to $\gamma_i$ for all $i$. Indeed, this is equivalent to the existence of a global isotopy of $S$ which moves $h(\gamma_i)$ to $\gamma_i$.
By the Alexander's trick, the latter statement means that $h$ is isotopic to the identity.
 
\end{proof}

\paragraph{\bf Dehn twists.} Recall the definition of a Dehn twist. Let $\gamma$ be a simple closed curve on a surface $S$, and let $A$ be a tubular neighborhood
of $\gamma$. Choose a homeomorphism 
$$h:S^1\times [0,1]\to A,$$
which endows the annulus $A$ with a coordinate system $(\theta,r)$ where $\theta\in\RR/\mod 2\pi\ZZ$ is the angular coordinate in $S^1$, and $r\in[0,1]$.
A {\it Dehn twist about $\gamma$} is the homeomorphism 
$$f:S\to S$$
which is identical outside $A$, and is given by 
$$f:h(\theta,r)\mapsto h(\theta+2\pi r, r).$$

\begin{figure}
\centerline{\includegraphics[width=0.8\textwidth]{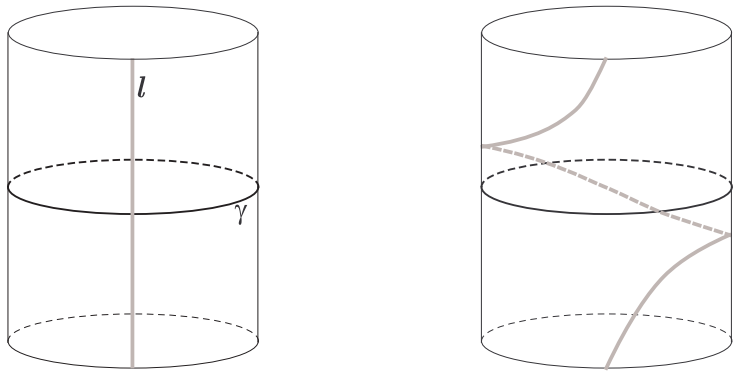}\label{fig-dehn-twist}}
\caption{Dehn twist of a cylinder about an equatorial curve $\gamma$: a vertical line $l$ before (left) and after (right) the twist is shown.}
\end{figure}

\paragraph{\bf Mapping class group.}
Since Thurston equivalence involves isotopies preserving pointwise the points of $P_{f}$,we are led to consider the \textit{pure mapping class group} $\text{PMod}(S^{2}-P_{f})$. 
It is the group of homeomorphisms of $S^{2}-P_{f}$ fixing $P_{f}$ pointwise,  modulo isotopies fixing $P_{f}$ pointwise.
The mapping class group acts on the set of isotopy classes of simple closed curves. 

We use the following fact:
\begin{prop}
\label{pmod}
The group $\text{PMod}(S^{2}-P_{f})$ is generated by a finite number of explicit Dehn twists.

\end{prop}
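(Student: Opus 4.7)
The plan is to invoke the classical theory of mapping class groups of punctured spheres. Since the orientation-preserving mapping class group $\text{Mod}(S^2)$ is trivial (every orientation-preserving self-homeomorphism of $S^2$ is isotopic to the identity, by Alexander's trick), iterated application of the Birman--Fadell--Neuwirth exact sequence to the configuration space of $P_f$ in $S^2$ will identify $\text{PMod}(S^2 - P_f)$ with a quotient of the pure spherical braid group $P_n(S^2)$ (modulo its cyclic center), where $n = |P_f|$.

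Next I would recall the classical finite Artin-type presentation of $P_n(S^2)$, whose generators $A_{ij}$ for $1 \le i < j \le n$ correspond under this isomorphism to Dehn twists about simple closed curves $\gamma_{ij}$ each of which separates $\{p_i, p_j\}$ from the remaining postcritical points. Since $n$ is finite, this produces a finite collection of $\binom{n}{2}$ Dehn twists that generates $\text{PMod}(S^2 - P_f)$.

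To deliver the \emph{explicit} character needed in the algorithmic applications, I would fix a triangulation $\cT$ of $S^2$ whose vertex set contains $P_f$, and for each pair $(i,j)$ exhibit a concrete polygonal simple closed curve $\gamma_{ij}$ in $\cT$ enclosing precisely $p_i$ and $p_j$ (easy to produce by connecting $p_i$ and $p_j$ by a polygonal arc and taking the boundary of a thin triangulated neighborhood). The Dehn twist along $\gamma_{ij}$ is then realized as a PL homeomorphism on a triangulated annular neighborhood via a PL approximation of the standard formula $h(\theta,r)\mapsto h(\theta+2\pi r,r)$, in accordance with \propref{ThPL} and Remark~\ref{representation-PL}.

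The main obstacle I anticipate is the passage from abstract braid-group generators to concrete Dehn twists about pair-enclosing curves, together with the handling of the center of $P_n(S^2)$ (which acts trivially on the punctured sphere because one puncture is available to ``unwind'' the full twist). In the actual write-up I would simply cite the classical finite generation of $\text{PMod}$ of a punctured sphere by pair-enclosing twists (see e.g.\ Farb--Margalit, \emph{A Primer on Mapping Class Groups}) rather than reprove it from scratch, and then verify that the particular generators can be encoded combinatorially as PL data.
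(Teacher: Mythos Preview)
Your proposal is correct, but the paper does not actually prove this proposition: it simply records it as a classical fact, crediting the finiteness to Dehn, the explicit construction to Lickorish \cite{Lic}, and pointing to Farb--Margalit \cite{FM} for an exposition. Your Birman--braid-group route is a valid (and more detailed) way to reach the same conclusion, and your final remark that you would ``simply cite the classical finite generation \ldots\ (see e.g.\ Farb--Margalit)'' is exactly what the paper does; the extra scaffolding you built is sound but unnecessary here.
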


The finiteness of the number of generating twists is a classical result of Dehn; Lickorish \cite{Lic} has made the construction explicit. See, for example, \cite{FM} for an exposition.

\section{Algorithm $A_1$: detecting an obstruction}
\label{section2}

\paragraph{\bf Enumeration of the multicurves.}

We first prove the following proposition:
\begin{prop}
\label{enumerate}
There exists an algorithm $A$ which enumerates all non-peripheral multicurves on $S^{2}-P_{f}$. 
\end{prop}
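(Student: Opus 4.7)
The plan is to enumerate multicurves by listing their polygonal representatives on successively finer triangulations of $(S^2,P_f)$. First, I would fix a triangulation $\cT_0$ of $S^2$ whose vertex set contains $P_f$, and let $\cT_n$ denote its $n$-th barycentric subdivision. For each fixed $n$, the $1$-skeleton $\cT_n^{(1)}$ is a finite graph on $S^2$, so one can algorithmically list all finite collections $\{\gamma_1,\ldots,\gamma_k\}$ of pairwise edge-disjoint simple closed polygonal loops in $\cT_n^{(1)}$ whose vertices avoid $P_f$. A small combinatorial perturbation near any shared vertex promotes such a family to a system of pairwise disjoint simple closed curves in $S^2-P_f$.

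Next, for each candidate family the algorithm checks the multicurve conditions. Essentialness and non-peripherality of an individual $\gamma_i$ is a finite combinatorial check, since $\gamma_i$ separates $S^2$ into two topological disks, and $\gamma_i$ qualifies if and only if each of these disks contains at least two points of $P_f$; the number of points of $P_f$ in each side is read off from the combinatorial face data of $\cT_n$. Pairwise non-homotopy rel $P_f$ is decided by invoking the algorithm of \propref{homotopy check} on every pair $\gamma_i,\gamma_j$. A family that passes every check is output as a multicurve, and the overall algorithm $A$ simply executes this finite subroutine for $n=0,1,2,\ldots$ in succession, concatenating the outputs.

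The remaining point is completeness: every isotopy class of non-peripheral multicurve must eventually appear in the output. Given a multicurve $\Gamma=\{\gamma_1,\ldots,\gamma_k\}$ on $(S^2,P_f)$, I would first isotope $\Gamma$ in $S^2-P_f$ so that the $\gamma_i$ lie in pairwise disjoint tubular annular neighborhoods in $S^2-P_f$. By standard PL approximation, each $\gamma_i$ can then be isotoped inside its annulus to a simple closed polygonal curve in the $1$-skeleton of some sufficiently fine barycentric subdivision $\cT_N$, producing a polygonal representative $\Gamma'$ in $\cT_N^{(1)}$ that is isotopic to $\Gamma$ in $S^2-P_f$. At stage $n=N$, the family $\Gamma'$ appears among the candidates enumerated in step one, passes every check, and is output.

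The main obstacle is precisely this PL approximation step: I need to guarantee that every multicurve admits a polygonal representative on some $\cT_n$ and, crucially, that disjoint components can be approximated by \emph{simultaneously} disjoint polygonal components. The annular thickening reduces the latter to the standard one-curve PL approximation lemma, but this is the only non-combinatorial ingredient of the argument and must be invoked explicitly to guarantee exhaustiveness of the enumeration. Note that the algorithm may list the same isotopy class more than once; if desired, one can use \propref{homotopy check} to suppress duplicates, but this is not required for the enumeration property.
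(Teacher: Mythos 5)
Your argument takes a genuinely different route from the paper's. The paper fixes one polygonal base curve for each admissible two-sided partition of $P_f$, together with a finite set of Dehn twists generating $\text{PMod}(S^{2}-P_{f})$ (\propref{pmod}), and enumerates images of the base curves under increasingly long words in these twists and their inverses; completeness then follows from \propref{fact1} (homeomorphisms act transitively on simple closed curves realizing a given partition of the punctures) together with Lickorish's finite-generation theorem. You bypass the mapping-class-group machinery entirely and instead enumerate polygonal loops directly in the $1$-skeletons of successive barycentric subdivisions $\cT_n$, with completeness resting on PL approximation. Both routes are sound; yours is arguably more elementary, trading the Lickorish theorem for the (also classical, and correctly flagged by you as the one non-combinatorial ingredient) fact that a disjoint family of curves can be isotoped into disjoint annuli and then simultaneously approximated by disjoint polygonal loops in a fine enough subdivision.

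One imprecision worth repairing: edge-disjoint loops in $\cT_n^{(1)}$ may still share a vertex at which they cross transversally (the four incident edges alternate between the two loops in the cyclic order around the vertex), and in that case no small perturbation makes them disjoint. Either restrict the enumeration to \emph{vertex}-disjoint loops — this costs no completeness, since the annular isotopy in your last step already produces vertex-disjoint polygonal representatives once $n$ is large enough that the mesh of $\cT_n$ is smaller than the gaps between the annuli — or retain edge-disjoint candidates but add a finite combinatorial check of the cyclic edge order at each shared vertex, discarding any family whose loops genuinely cross there.
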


For ease of reference let us state the following elementary fact:
\begin{prop}
\label{fact1}
Let $S^*$ denote $S^2$ with a finite number of punctures. Consider two simple closed curves $\gamma_1$ and $\gamma_2$ in $S^*$.
Assume that a component of $S^*-\gamma_1$ contains the same number of punctures as some component of $S^*-\gamma_2$. Then there exists a self-homeomorphism of $S^*$
which sends $\gamma_1$ to $\gamma_2$.
\end{prop}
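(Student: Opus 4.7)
The plan is to reduce the problem to a standard model via the Jordan--Schoenflies theorem and then to realign the punctures inside each complementary disk. Write $n=|P|$, so $S^*=S^2\setminus P$. Each $\gamma_i$ is a simple closed curve in $S^2$ and therefore bounds two closed topological disks; let $U_i,V_i$ denote the open components of $S^2\setminus\gamma_i$. By hypothesis, the multisets $\{|U_i\cap P|,|V_i\cap P|\}$ coincide for $i=1,2$, so after relabeling we may assume
\[
|U_1\cap P|=|U_2\cap P|=k\quad\text{and}\quad|V_1\cap P|=|V_2\cap P|=n-k.
\]

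By the Jordan--Schoenflies theorem applied to each $\gamma_i$, there is a self-homeomorphism $h_0\colon S^2\to S^2$ with $h_0(\overline{U_1})=\overline{U_2}$, $h_0(\overline{V_1})=\overline{V_2}$, and in particular $h_0(\gamma_1)=\gamma_2$. This map need not send $P$ to $P$; rather, $h_0(U_1\cap P)$ is \emph{some} $k$-element subset of $U_2$, while $U_2\cap P$ is another such subset, and similarly on the $V$-side.

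To rectify this, I would invoke the standard fact that if $\overline D$ is a closed topological disk and $A,B\subset\mathrm{int}(D)$ are finite subsets with $|A|=|B|$, then there exists a homeomorphism $\psi\colon\overline D\to\overline D$ with $\psi|_{\partial D}=\mathrm{id}$ and $\psi(A)=B$. This follows by moving the points one at a time along disjoint arcs in $\mathrm{int}(D)$, applying the isotopy extension theorem to obtain an ambient isotopy, and then using Alexander's trick to damp the isotopy to the identity near $\partial D$. Applying this inside $\overline{U_2}$ yields $\phi_U$ carrying $h_0(U_1\cap P)$ to $U_2\cap P$ and fixing $\gamma_2$ pointwise, and similarly one obtains $\phi_V$ on $\overline{V_2}$.

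Since $\phi_U$ and $\phi_V$ are both the identity on $\gamma_2$, they glue to a homeomorphism $\phi\colon S^2\to S^2$. The composition $\phi\circ h_0$ is then a self-homeomorphism of $S^2$ with $(\phi\circ h_0)(\gamma_1)=\gamma_2$ and $(\phi\circ h_0)(P)=P$; restricting to $S^2\setminus P=S^*$ gives the required self-homeomorphism sending $\gamma_1$ to $\gamma_2$. The only non-trivial ingredient is the point-moving lemma, which is routine; all other steps are immediate from Schoenflies.
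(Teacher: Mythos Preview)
Your argument is correct. The paper itself does not prove this proposition; it is introduced with the phrase ``let us state the following elementary fact'' and left without proof. Your write-up supplies exactly the standard justification one would expect: Jordan--Schoenflies to align the curves, followed by the point-moving lemma inside each complementary disk (with the maps fixed on the common boundary so they glue). There is nothing to compare against in the paper, and nothing to correct in your proof.
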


We fix a finite collection of simple closed curves $c_1,\ldots, c_M$ so that the Dehn twists $T_{1}, \ldots, T_{M}$ around those curves generate the mapping class group 
$\text{PMod}(S^{2}-P_{f})$.  This construction can be performed algorithmically by \propref{pmod}.
We further refine our initial triangulation of the sphere so that these Dehn twists can be considered as piecewise linear maps relatively to the refined triangulation.

For every set of $j$ punctures with  $j \in \{2, \ldots , \vert P_{f}\vert-2\}$ we choose one polygonal
simple closed curve  which separates them from the rest of $P_f$. Denote these curves $\gamma_k$ (a simple count shows that $k=1\ldots 2^{|P_f|-1}-|P_f|-1$.

\begin{proof}[Proof of \propref{enumerate}]
To enumerate all multicurves, we proceed inductively as follows.
At step $0$, our collection of multicurves consists of all finite subsets of the set $\{\gamma_k\}$.

At step $N \in \NN$, we generate all the possible images of the curves $\gamma_{k}$ by reduced words in the $T_{i}, T_{i}^{-1}$ of length less than $N$. 
Using \propref{homotopy check}, we remove all duplications from this finite collection. All inessential or peripheral curves are likewise removed.

 We then consider all finite subsets of this collection. Using
\propref{homotopy check} again, we remove all subsets which have been generated previously, at steps $0,1,\ldots,N-1$.

By \propref{fact1} and \propref{pmod}, every multicurve is thus generated.
\end{proof}

\paragraph{\bf Construction of the algorithm $A_1$.}
Denote $A(n)$ the algorithm of \propref{enumerate}, which generates the exhaustive sequence of multicurves $\Gamma_n$.
Set $n=1$.
\begin{itemize}
\item[(I)] use \propref{homotopy check} to check whether $\Gamma_n$ is invariant. If not, proceed to step (V).
\item[(II)] Compute the transition matrix $M_{\gamma\delta}$ of the associated Thurston linear transformation $f_{\Gamma_n}$. Denote
$P_n(\lambda)$ the characteristic polynomial of $M_{\gamma\delta}$.
\item[(III)] Is $1$ a root of $P_n$? If yes, go to step (VI).
\item[(IV)] For $1\leq j\leq n$ {\bf do}
\begin{itemize}
\item Use  $A_3$ to query whether $P_j$ has a root $\lambda$ with $B(\lambda,2^{-3i})\subset [1+2^{-i},\infty).$ If yes, go to step (VI).
\end{itemize} 
{\bf end  do}
\item [(V)] $n\mapsto n+1$. Return to step (I).
\item[(VI)] Return {\bf there exists a Thurston obstruction} and halt.
\end{itemize}

\section{Algorithm $A_2$: finding an equivalent rational map}
\label{section3}
\subsection{Moduli space of rational maps}
Let $\text{Rat}_{d}$ denote the space of all holomorphic maps of degree $d\geq 2$ to itself. 
These maps can be written as fractions $\frac{p(z)}{q(z)}$, where the polynomials $p,\; q$ 
are relatively prime and $d= \max(\textrm{deg }  p,\textrm{ deg } q)$. It can be shown that
  $\text{Rat}_{d}$ is a connected complex-analytic manifold of dimension $2d+1$. Denoting 
$\text{Res}(P,Q)$ the resultant of $P$ and $Q$,
 one can represent $\text{Rat}_{d}$ as the open set $\mathbb{P}^{2d+1}/ V$, 
where $V=\{ P,Q: \text{Res}(P,Q)=0\}$. (See for example \cite{Sil}, page 169).

Since we are interested in equivalence classes of rational maps under conjugation by 
M{\"o}bius maps, we are led to consider the moduli space 
$\mathcal{M}_{d}=\text{Rat}_{d}/ \text{PSL}_{2}(\mathbb{C})$ 
(observe that it is the same as $\text{Rat}_{d}/ \text{PGL}_{2}(\mathbb{C})$). 

We note the following easy fact:
\begin{prop}
The moduli space $\cM_d$ has the structure of a complex orbifold of dimension $2d-2$.
\end{prop}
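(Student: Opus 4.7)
My plan is to realize $\cM_d$ as a quotient of the complex manifold $\mathrm{Rat}_d$ by a proper holomorphic action of the complex Lie group $\mathrm{PSL}_2(\CC)$ with finite stabilizers, and then invoke the standard fact that such a quotient carries a canonical structure of complex orbifold whose (complex) dimension equals the difference of the dimensions.

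The first paragraph of the write-up would recall the ingredients already established in the excerpt: $\mathrm{Rat}_d$ is a connected complex manifold of dimension $2d+1$, realized as a Zariski open subset of $\PP^{2d+1}$. The group $G:=\mathrm{PSL}_2(\CC)$ is a complex Lie group of complex dimension $3$, and the conjugation action
\[
G\times \mathrm{Rat}_d \longrightarrow \mathrm{Rat}_d,\qquad (\mu,f)\longmapsto \mu\circ f\circ\mu^{-1},
\]
is plainly holomorphic (it is the restriction of a regular action of $\mathrm{PGL}_2(\CC)$ on $\PP^{2d+1}$). So the dimension count $2d+1-3=2d-2$ is automatic once we know the quotient is an orbifold.

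The main work is to verify the two hypotheses needed for the orbifold slice theorem. \emph{Finiteness of stabilizers}: for $f\in\mathrm{Rat}_d$ with $d\ge 2$, the stabilizer
\[
\mathrm{Aut}(f)=\{\mu\in G : \mu\circ f\circ \mu^{-1}=f\}
\]
permutes the finite set of fixed points of $f$ on $\riem$, which contains at least $d+1\ge 3$ distinct points generically, and in any case at least three distinct periodic points of some bounded period (obtained by iterating and using that a M\"obius map fixing three points is the identity). This gives an injection of $\mathrm{Aut}(f)$ into a finite symmetric group, so $\mathrm{Aut}(f)$ is finite. \emph{Properness}: suppose $f_n\to f$ in $\mathrm{Rat}_d$ and $\mu_n f_n\mu_n^{-1}\to g$; one must show $\{\mu_n\}$ has a convergent subsequence in $G$. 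Picking three distinct fixed points $p_1,p_2,p_3$ of $f$ and three distinct fixed points $q_1,q_2,q_3$ of $g$, and using continuity of roots of the fixed-point polynomials, we can extract a subsequence along which $\mu_n$ sends a triple of fixed points of $f_n$ to a triple of fixed points of $\mu_n f_n\mu_n^{-1}$ converging to the $q_j$'s; since a M\"obius map is determined by its values on three points, the $\mu_n$ converge.

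With both hypotheses in hand, I would invoke the Cartan/Holmann theorem that the quotient of a complex manifold by a proper holomorphic action of a complex Lie group with finite isotropy is a complex orbifold, whose local charts at the orbit of $f$ are given by $V/\mathrm{Aut}(f)$ for a $\mathrm{Aut}(f)$-invariant holomorphic slice $V$ transverse to the orbit. The dimension of this slice is $\dim_{\CC}\mathrm{Rat}_d-\dim_{\CC}G=2d-2$, which gives the claimed dimension. The main obstacle I anticipate is the properness check, since conjugation by elements of $\mathrm{PSL}_2(\CC)$ can a priori escape to infinity; the argument via fixed points above avoids this because the fixed-point scheme depends continuously on the map and consists of at least three points for $d\ge 2$.
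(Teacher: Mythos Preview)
Your argument is essentially the paper's, only more complete: the paper's entire proof consists of the observation that the stabilizer of $f$ in $\text{PSL}_2(\CC)$ must permute a finite set $P_n$ of periodic points of period at most $n$ (for $n$ chosen so that $|P_n|\ge 3$), and hence is finite; properness of the action and the slice theorem are left implicit. One small slip to correct in your properness step: you pick three \emph{distinct} fixed points of $f$ and of $g$, but a degree-$d$ map need not have three distinct fixed points (e.g.\ $z\mapsto z^2+\tfrac14$ has only the double fixed point $\tfrac12$ and $\infty$). Replace ``fixed points'' there by ``periodic points of period at most $N$'' for $N$ chosen so that both $f$ and $g$ have at least three distinct such points---exactly the device you already used for the stabilizer---and the continuity-of-roots argument you sketch goes through.
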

\begin{proof}
The stabilizer of a rational map $f\neq \text{Id}$ under the action of $\text{PSL}_{2}(\mathbb{C})$ is the subgroup
$\cS(f)$ consisting of M{\"o}bius maps which commute with $f$. There exists $n\in\NN$ such that the set $P_n$
consisting of periodic points of $f$ with periods less than $n$ has at least three points. Since every $M\in\cS(f)$
must permute the points in $P_n$, the stabilizer $\cS(f)$ is necessarily finite.
\end{proof}
As an example of a rational map with a non-trivial stabilizer, consider $f(z)=z^d$ for $d>2$, in which case,
$$\cS(f)=\{z\mapsto \lambda z|\text{ where }\lambda^{d-1}=1\}.$$ 

We will now require a more computation-friendly description of $\cM_d$. There are several similar approaches to this in the
existing literature; we use the work \cite{PilC}. As a first step we note the
following standard fact:
\begin{prop}
Suppose $R$ is not conjugate to a map of the form $z\mapsto z^{\pm d}$. Then the union of the critical and the postcritical sets $C_f\cup P_f$ contains at least three points.
\end{prop}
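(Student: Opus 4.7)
My plan is to prove the contrapositive: assume $|C_f\cup P_f|\le 2$ and deduce that $f$ is M{\"o}bius-conjugate to $z^d$ or $z^{-d}$. Set $S:=C_f\cup P_f$ and write $V_f:=f(C_f)$ for the set of critical values. Then $V_f\subseteq P_f\subseteq S$ and $f(S)\subseteq S$, so in particular $|V_f|\le |S|\le 2$.

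Next I would apply the Riemann--Hurwitz formula for $f\colon\hat\CC\to\hat\CC$, which reads $\sum_p(e_p-1)=2d-2$, together with the identity $\sum_{p\in f^{-1}(v)}e_p=d$ for every $v$. Since the entire ramification is supported over $V_f$, the case $|V_f|=0$ forces $d=1$, and the case $|V_f|=\{v\}$ gives $d-|f^{-1}(v)|=2d-2$, i.e., $|f^{-1}(v)|=2-d\le 0$, a contradiction. Hence $|V_f|=2$, say $V_f=\{v_1,v_2\}$. The same count applied to each critical value yields $(d-k_1)+(d-k_2)=2d-2$ with $k_i:=|f^{-1}(v_i)|\ge 1$, which forces $k_1=k_2=1$. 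Thus each $v_i$ has a unique preimage $c_i$ with ramification index $d$, and $C_f=\{c_1,c_2\}$ consists of exactly two totally ramified points of local degree $d$.

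Now the four (not necessarily distinct) points $c_1,c_2,v_1,v_2$ lie in $S$, so at most two of them are distinct. Since $c_1\ne c_2$ and $v_1\ne v_2$, the only possibility is $\{c_1,c_2\}=\{v_1,v_2\}$ as sets, so $f$ either fixes both critical points or swaps them. I would then conjugate by a M{\"o}bius transformation sending $\{c_1,c_2\}$ to $\{0,\infty\}$. In the fixing case the conjugated map has a zero of order $d$ at $0$ and a pole of order $d$ at $\infty$, so it must be of the form $az^d$; a further scaling $z\mapsto bz$ with $b^{d-1}=a$ normalizes it to $z^d$. In the swapping case one obtains $f(z)=a/z^d$, which is analogously conjugated to $z^{-d}$.

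The argument is essentially a routine Riemann--Hurwitz exercise, so I do not expect any genuine obstacle. The only place that requires minor care is tracking the four-point identification in the last paragraph; this is why I would explicitly record that $c_1\ne c_2$ and $v_1\ne v_2$ before enumerating the compatible identifications with $|S|\le 2$.
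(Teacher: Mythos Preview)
Your argument is correct. The contrapositive approach via Riemann--Hurwitz is the standard way to see this: the ramification count forces exactly two critical values, each totally ramified, and the constraint $|S|\le 2$ then pins $\{c_1,c_2\}=\{v_1,v_2\}$, after which a M{\"o}bius normalization finishes it. The only cosmetic slip is in the scaling at the end: conjugating $az^d$ by $z\mapsto bz$ gives $ab^{d-1}z^d$ or $ab^{1-d}z^d$ depending on direction, so the condition should be $b^{d-1}=a^{\pm 1}$ rather than literally $b^{d-1}=a$; either way a suitable $b$ exists, so this does not affect the argument.

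As for comparison with the paper: there is nothing to compare. The paper states this proposition as a ``standard fact'' and gives no proof. Your write-up supplies exactly the routine verification the authors chose to omit.
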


\paragraph{\bf Mapping scheme.}

A {\it mapping scheme} of degree d is a triple $(N, \tau, \omega)$, where $N\in\NN$ and $N\geq 3$;
$\tau$ is a {\it dynamics function}
$$\{1,\ldots,N\}\overset{\tau}{\longrightarrow}\{1,\ldots,N\},$$
and  $\omega$ is a {\it local degree function}
$$\{1,\ldots,N\}\overset{\omega}{\longrightarrow}\NN.$$

For a postcritically finite branched covering map $f:S^2\to S^2$ denote $Z_{f}$ is the union $C_{f}\cup P_{f}$ of the critical set and the postcritical set of $f$.
We say that  $f$ {\it realizes} $X=(N,\tau,\omega)$ 
if we can choose a bijection $\psi:\{1,\ldots,N\}\to Z_f$ such that:
\begin{itemize}
\item  $\psi(\tau(x))=f(\psi(x))$;
\item  the local degree of $f$ at $\psi(x)$ is equal to $\omega(x)$.
\end{itemize}
Following \cite{PilC}, a {\it normalization}
of a  mapping scheme $X=(N,\tau, \omega)$ is an injection 
$$\alpha:\{0,1,\infty\}\to \{1,\ldots,N\}.$$
We will denote a pair $(X,\alpha)$ by $X_\alpha$, and refer to it as a {\it marked mapping scheme}.

We say that a rational map $f:\hat \CC\to \hat \CC$ realizes a marked mapping scheme $X_\alpha$ if the bijection $\psi$ as defined above
has the additional property:
\begin{itemize}
\item $\{0,1,\infty\}\subset Z_f\text{ and }\psi^{-1}(i)=\alpha(i).$
\end{itemize}
The set of all rational realizations of a specific marked mapping 
scheme $X^{*}=X_\alpha$ will be written as $\text{Rat}^{\times}(X^{*})$.

Viewing the points in $\{1,\ldots,N\}$ as vertices of a weighted directed graph with
arrows connecting $x$ with $\tau(x)$ having weights $\omega(x)$,
 we define a {\it signature} $\cN_X$ as follows.
The signature is the set
$$\cN_X=\{N(x),\; x\in\tau(\{1,\ldots,N\})\},$$
where $N(x)$ is defined as the least common multiple, over all directed paths
of any length joining $y$ to $x$, of the product of the weights of edges along this
path. The significance of this definition lies in the following:
\begin{prop}[\cite{DH}]
If $f$ is a rational mapping which realizes the mapping scheme $X$, then $\cN_X$ is the signature of the  orbifold of $f$.
\end{prop}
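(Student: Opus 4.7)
The plan is to identify directed paths in the weighted graph underlying the scheme $X$ with orbit segments of $f$ that stay inside $Z_f$, and then show that along this identification the two definitions of $N(x)$ and $N_f(\psi(x))$ agree term by term.

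First I would set up the dictionary. Note that $f(Z_f)=f(C_f)\cup f(P_f)=P_f$, so for $x\in\tau(\{1,\ldots,N\})$ the point $\psi(x)$ lies in $P_f$, and $\psi$ restricts to a bijection between $\tau(\{1,\ldots,N\})$ and $P_f$. Any directed path $y_0\to\tau(y_0)\to\cdots\to\tau^k(y_0)=x$ in the graph corresponds under $\psi$ to an orbit segment $\psi(y_0),f(\psi(y_0)),\ldots,f^{\circ k}(\psi(y_0))=\psi(x)$ contained in $Z_f$, and by the chain rule the product of the edge weights along the path equals
\[
\prod_{i=0}^{k-1}\omega(\tau^i(y_0))\;=\;\prod_{i=0}^{k-1}\deg_{f^{\circ i}(\psi(y_0))}(f)\;=\;\deg_{\psi(y_0)}(f^{\circ k}).
\]
The divisibility $N(x)\mid N_f(\psi(x))$ is then immediate: each path weight is one of the local degrees $\deg_{\psi(y_0)}(f^{\circ k})$ appearing in the lcm that defines $N_f(\psi(x))$.

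The main obstacle will be the reverse divisibility, because the lcm defining $N_f(\psi(x))$ ranges over all iterated preimages $y\in S^2$, not just those living in $Z_f$. To handle this, given any $y$ with $f^{\circ k}(y)=\psi(x)$, I would write $\deg_y(f^{\circ k})=\prod_{i=0}^{k-1}\deg_{f^{\circ i}(y)}(f)$ and observe that a factor is nontrivial only when $f^{\circ i}(y)\in C_f$. If no such $i$ exists then $\deg_y(f^{\circ k})=1$. Otherwise let $i_1$ be the least such index and set $y':=f^{\circ i_1}(y)\in C_f\subset Z_f$. Since $f(C_f)\subset P_f$ and $f(P_f)\subset P_f$, the whole orbit $y',f(y'),\ldots,f^{\circ(k-i_1)}(y')=\psi(x)$ stays in $Z_f$, and therefore comes from a directed path in the graph ending at $x$; the factors $\deg_{f^{\circ i}(y)}(f)$ for $i<i_1$ are all $1$, so the product of weights along this path equals $\deg_{y'}(f^{\circ(k-i_1)})=\deg_y(f^{\circ k})$. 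Hence $\deg_y(f^{\circ k})$ divides $N(x)$, giving $N_f(\psi(x))\mid N(x)$.

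Finally I would dispatch the infinite case: $N_f(\psi(x))=\infty$ precisely when $\psi(x)$ lies on a cycle of $f$ containing a critical point, which transports through $\psi$ to the condition that $x$ lies on a cycle of $\tau$ passing through a vertex of weight $>1$, which is exactly the condition producing $N(x)=\infty$ in the scheme. Combining this with the two divisibilities above, the finite values $>1$ of $N_f$ on $P_f$ match the finite values $>1$ of $N$ on $\tau(\{1,\ldots,N\})$, so $\cN_X$ equals the signature of the orbifold of $f$. The entire argument hinges on the observation, used in the reverse divisibility, that the first critical iterate of any preimage of $\psi(x)$ necessarily lies in $Z_f$, which is what lets the extrinsic lcm be computed intrinsically on the scheme.
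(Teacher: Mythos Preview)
The paper does not supply its own proof of this proposition; it is quoted from \cite{DH} and left unproved, so there is nothing to compare your argument against. Your proof is correct and complete. The crucial step is the one you flag at the end: for an arbitrary iterated preimage $y$ of $\psi(x)$, the first critical iterate $y'=f^{\circ i_1}(y)$ lies in $C_f\subset Z_f$, and from that point on the orbit stays in $Z_f$; this is precisely what collapses the extrinsic lcm over all of $S^2$ to the intrinsic lcm over paths in the scheme. One cosmetic remark: as literally defined in the paper, $\cN_X$ may contain $\infty$ while the orbifold signature explicitly excludes it; your closing paragraph handles this correctly by matching the finite values, which is the intended statement.
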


We can now state:

\begin{thm}[\cite{PilC}]
\label{thm-scheme}
Given a mapping scheme $X$ whose signature  is not $(2,2,2,2)$ and a normalized marking scheme $X^{*}=(X,\tau,\omega)$,
  there is an injection $\iota :\text{Rat}^{\times}(X^{*})\rightarrow \mathbb{C}^{N}$ 
such that the image is a zero-dimensional affine variety 
$V(\mathcal{I})$ determined by an ideal $\mathcal{I}=\cI_{X^*}$, where $\mathcal{I}$ is defined over the rationals. In particular, 
$\text{Rat}^{\times}(X^{*})$ is finite. Furthermore, a basis for $\mathcal{I}$ can be algorithmically computed.
\end{thm}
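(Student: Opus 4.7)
The plan is to embed $\text{Rat}^{\times}(X^{*})$ into $\CC^{N}$ by the map $\iota$ sending a realization $f$ to the $N$-tuple $(\psi(1),\ldots,\psi(N))$ of its marked critical-postcritical values, read off in a fixed affine chart on $\hat\CC$. The normalization $\alpha$ pins three of these coordinates to the prescribed values $0,1,\infty$, killing the M\"obius action. Injectivity of $\iota$ follows because the marked critical-postcritical data, together with the three normalized values, over-determine $f$ as a degree-$d$ rational map: the divisor of critical points determines $f$ up to post-composition with a M\"obius transformation, and that transformation is in turn pinned down by the three prescribed values at $\alpha(0),\alpha(1),\alpha(\infty)$.

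Next I would describe the defining ideal. Parametrize $f=p/q$ by introducing auxiliary variables for the coefficients of polynomials $p,q$ of degree at most $d$. The data of $X^{*}$ then translates into a finite system of polynomial equations with integer coefficients: orbit relations $p(\psi(x))-\psi(\tau(x))\,q(\psi(x))=0$ (with the obvious modifications when $\tau(x)=\alpha(\infty)$); critical conditions requiring $\psi(x)$ to be a zero of $p'q-pq'$ of multiplicity $\omega(x)-1$ whenever $\omega(x)\geq 2$; the normalization constraints $\psi(\alpha(j))=j$ for $j\in\{0,1,\infty\}$; and a nondegeneracy condition $\text{Res}(p,q)\neq 0$ imposed by localization. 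Eliminating the coefficient variables by a Gr\"obner basis or iterated resultant computation over $\QQ$ produces an ideal $\cI\subset\QQ[z_{1},\ldots,z_{N}]$ whose vanishing locus $V(\cI)\subset\CC^{N}$ contains $\iota(\text{Rat}^{\times}(X^{*}))$.

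The hard part is to show that $V(\cI)$ is zero-dimensional and coincides with $\iota(\text{Rat}^{\times}(X^{*}))$. The moduli space $\cM_{d}$ has complex dimension $2d-2$, and the constraints of a fixed combinatorial type are expected to cut out a finite subset; the excluded signature $(2,2,2,2)$ corresponds precisely to the flexible Latt\`es family, the unique positive-dimensional family of rational maps sharing a common mapping scheme. In every other case, Thurston rigidity applied to the entire combinatorial equivalence class forces finiteness of $\text{Rat}^{\times}(X^{*})$ as a subset of $\cM_{d}$. One then checks that the elimination step has not produced spurious higher-dimensional components, by verifying that over each irreducible component of $V(\cI)$ the pair $(p,q)$ can be reconstructed as a bona fide element of $\text{Rat}^{\times}(X^{*})$.

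Finally, the computation of a basis for $\cI$ is carried out by Buchberger's algorithm applied to the explicit polynomial system above. Since the input system has integer coefficients and the marked mapping scheme $X^{*}$ is a finite combinatorial datum, the whole procedure is effective.
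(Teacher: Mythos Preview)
Your construction of $\iota$ differs from the paper's in a way that creates real problems. The paper does not project onto the marked points $(z_1,\ldots,z_N)$ alone; instead it writes each normalized realization uniquely in the form
\[
f(z)=\frac{a_{0}+\cdots +a_{r}z^{r}}{b_{0}+b_{1}z+\cdots +z^{s}}\cdot \frac{\prod_{n \in FZ}(z-z_{n})^{d_{n}}}{\prod_{m \in FP}(z-z_{m})^{d_{m}}}
\]
and takes the tuple of coefficients $a_j,b_j$ together with the $z_j$ as the coordinates in $\CC^{N}$. With these coordinates injectivity is immediate, the defining equations for $V(\cI)$ are written down directly (no elimination), and the question of spurious components never arises.

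Your route, by contrast, has a genuine gap in the injectivity step. The claim that ``the divisor of critical points determines $f$ up to post-composition with a M\"obius transformation'' is false for $d\ge 3$: the Wronski map from $2$-dimensional subspaces of polynomials of degree $\le d$ to their Wronskian has degree equal to the Catalan number $C_{d-1}>1$, so a generic critical divisor is shared by several rational maps not related by post-composition. One can rescue injectivity by a different argument (counting zeros of $f_1-f_2$ using that $f_1,f_2$ agree to order $\omega(i)$ at every $z_i$), but that is not the argument you gave. A smaller issue: one of your coordinates is $\psi(\alpha(\infty))=\infty$, so the tuple does not lie in $\CC^{N}$ as written. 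Finally, the elimination step you propose is exactly where extraneous components can appear, and your sketch of how to rule them out (``verify that $(p,q)$ can be reconstructed over each component'') is not an argument. The paper's choice of coordinates sidesteps all three difficulties at once.
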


Let us give an indication of how the injection $\iota$ may be defined.
Enumerate the elements of $Z_{f}$ as $\{ z_{i}\}$ so that $\psi(i)=z_{i}$.
Isolate the elements sent respectively to zero and the infinity as follows: 
$$FZ=\{ n \vert z_{n}\in Z_{f}-\{\infty\} \textrm{ and } f(z_{n})=0\},$$ 
$$\text{ and }FP=\{ m \vert z_{m}\in Z_{f}-\{\infty\} \textrm{ and} f(z_{m})=\infty\}.$$ 
Then any normalized rational realization $f$ can be written {\sl uniquely} as
\begin{equation}
\label{normalized map}
f(z)=\frac{a_{0}+\ldots +  a_{r}z^{r}}{b_{0}+ b_{1}z+ \ldots + 1\cdot z^{s}}\cdot \frac{\prod_{n \in FZ}(z-z_{n})^{d_{n}}}{\prod_{m \in FP}(z-z_{m})^{d_{m}}}
\end{equation}
Thus, the coordinates $a_j$, $b_j$, and $z_j$, specify the normalized rational map as a point in $\CC^N$.

\subsection{The algorithm $A_2$}
We start with a piecewise-linear Thurston map $f:S^2\to S^2$ with triangulation $\cT$. 
Let $X$ be a mapping scheme which is realized 
by $f$, and let $\alpha$ be a marking of $X$. Set $X^*=X_\alpha$.
Let $N$ and  $\cI_{X^*}$ be as in \thmref{thm-scheme}. Compute the finite set $g=(g_{1}, \ldots g_{r})$ of polynomials with rational coefficients, which generates the ideal $I_{X^*}$. Denote $A$ the algorithm of \propref{enumerate}. Let $\{R_1,\ldots,R_m\}$ be the finite collection of rational maps which realize the marked
mapping scheme $X^*$.

The algorithm $A_2$ works as follows:
\begin{itemize}
\item[(I)] use $A$ to enumerate as $D_i^k$ the representatives of the mapping class groups $\text{PMod}(S^2-P_{R^k})$;
\item[(II)] for every $1\leq k\leq m$ and every pair $(i,j)\subset \NN\times \NN$ {\bf do}
\begin{itemize}
\item discretize the map $(D_j)^{-1}\circ R_k\circ D_j$ to a piecewise linear Thurston mapping $M_j^k$ with
triangulation $\cT_1$. Note that at this stage the algorithm $A_3$ may need to be invoked to better estimate
the coefficients of $R_k$;
\item refine the triangulations $\cT$ and $\cT_1$ to obtain a triangulation $\cT_2$ on which both $f$ and $M_{i,j}^k$ are defined;
\item identify all triangulated orientation preserving 
homeomorphisms $h_j$ of $\cT_2$; use the algorithm of \propref{identify-isometry} to list all $w_k=h_{j_k}$ which are
isotopic to the identity;
\item perform a finite check to determine whether there exists a pair $w_i$, $w_j$ such that 
$$w_i\circ M_{i,j}^k=w_j\circ f.$$ 
If yes, go to (III). {\bf End do} 
\end{itemize} 
\item[(III)] output {\bf $f$ is Thurston equivalent to a rational map}, output the isolating neighborhood for $R_k$, and exit the algorithm.
\end{itemize}

\section{Concluding remarks}

Let us note an easy corollary of our main result. Consider the following decidability problem:

\medskip
\noindent
{\bf Problem (a):} {\sl Given two piecewise linear Thurston mappings $f$ and $g$ with hyperbolic orbifolds and
without Thurston obstructions, are $f$ and $g$ Thurston equivalent?}

\medskip
\noindent

\begin{thm}
Problem (a) is algorithmically decidable.

\end{thm}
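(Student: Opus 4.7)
The plan is to reduce Problem (a) to deciding M\"obius conjugacy of two explicitly computable rational representatives. By hypothesis both $f$ and $g$ have hyperbolic orbifolds and admit no Thurston obstructions, so by Thurston's theorem each is Thurston equivalent to a rational map, unique up to M\"obius conjugation; call these $R_f$ and $R_g$. By uniqueness, $f$ is Thurston equivalent to $g$ if and only if $R_f$ and $R_g$ are M\"obius conjugate, so this is the statement we need to decide.

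First I would run algorithm $A_2$ on both $f$ and $g$ in parallel. By hypothesis both terminate, producing marked mapping schemes $X_f^*$ and $X_g^*$ together with isolating neighborhoods inside $V(\cI_{X_f^*})$ and $V(\cI_{X_g^*}) \subset \CC^N$ (see \thmref{thm-scheme}) for $R_f$ and $R_g$, and via $A_3$ the coefficients of $R_f$ and $R_g$ can be computed to arbitrary precision. A necessary condition for M\"obius conjugacy is that $R_f$ and $R_g$ realize isomorphic unmarked mapping schemes, which is a purely combinatorial check on the finite data already produced; if this check fails, the maps are not equivalent and we halt.

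Assuming the schemes are isomorphic, I would next decide M\"obius conjugacy. Any conjugating $\phi\in\text{PSL}_2(\CC)$ must send $Z_{R_f}$ bijectively to $Z_{R_g}$, respecting the dynamical labeling and the local degrees; since $\phi$ is determined by the images of any three distinct points, the enumeration of candidate $\phi$ reduces to a finite list of choices. For each candidate, form the polynomial system over $\QQ$ obtained by combining the generators of $\cI_{X_f^*}$, the generators of $\cI_{X_g^*}$, and the polynomial equations encoding $\phi\circ R_f = R_g\circ\phi$ (with the M\"obius coefficients of $\phi$ as additional unknowns). The question of whether a common solution exists in the specified isolated components is decidable by standard symbolic methods such as Gr\"obner basis computation or cylindrical algebraic decomposition.

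The principal difficulty I anticipate is this last step: certifying equality or inequality of the algebraic numbers describing $R_f$ and $R_g$ from their implicit representations as isolated roots of rational polynomial systems, given only numerical approximations in hand. This can be handled either symbolically, through the combined ideal above, or numerically via a computable root-separation bound for polynomial systems over $\QQ$ together with a call to $A_3$ at sufficiently high precision; either route yields a terminating decision procedure and concludes the proof.
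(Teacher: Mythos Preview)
Your proposal is correct and follows the same high-level reduction as the paper: both observe that, by Thurston's uniqueness, the question collapses to deciding whether the rational representatives $R_f$ and $R_g$ are M\"obius conjugate, and both exploit that any conjugating M\"obius map must respect the finite set $Z_{R_f}\to Z_{R_g}$ and is hence determined by finitely many combinatorial choices.

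Where you diverge is in the final certification step. You introduce the M\"obius coefficients as extra unknowns, adjoin the conjugacy relation to the two ideals, and then appeal to Gr\"obner bases, CAD, or explicit root-separation bounds to decide membership in the correct zero-dimensional components. The paper avoids all of this machinery with a lighter observation: the M\"obius conjugates of $R_f$ that realize the same unmarked scheme $X$ are precisely the re-normalizations of $R_f$ under the finitely many markings $\alpha:(0,1,\infty)\to\{1,\ldots,N\}$, and each such re-normalization is again a point of some $V(\cI_{X_\alpha})$. So the paper simply loops over ordered triples $(a,b,c)$, computes the re-normalized $R_f$ to precision $2^{-(n+2)}$, and tests numerically whether it lands in the isolating $2^{-n}$-neighborhood $U_g$ already output by $A_2$. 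The isolating property of $U_g$ plays the role of your root-separation bound for free, so no additional symbolic computation is needed. Your route is more general-purpose and would work in settings where such a convenient parametrization of the conjugacy class is unavailable; the paper's route is shorter because it reuses exactly the data $A_2$ has already produced.
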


\begin{proof}
Denote $R_f$ and $R_g$ the rational maps equivalent to $f$ and $g$ respectively. The existence of such maps
is guaranteed by Thurston's theorem. They are defined up to a M{\"o}bius conjugacy. 
The algorithm works as follows:
\begin{itemize}
\item[(I)] Check if $f$ and $g$ have identical mapping schemes $X=(N,\tau,\omega)$. If not, output {\bf the maps are not Thurston equivalent} and exit.
\item[(II)] Run the algorithm $A_2$ to find isolating $2^{-n}$-neighborhoods $U_f$ and $U_g$ of $R_f$ and $R_g$ respectively 
in the parameter space of normal forms (\ref{normalized map}).
\item[(III)] For each ordered triple of distinct natural numbers $(a,b,c)$ between $1$ and $N$ {\bf do}
\begin{itemize}
\item Normalize the mapping scheme $X$ by $\alpha:(0,1,\infty)\mapsto (a,b,c)$. Calculate a point of the parameter space $\bar w_{(a,b,c)}$ 
representing the normal form (\ref{normalized map}) of $R_f$ corresponding to $(X,\alpha)$ with precision $2^{-(n+2)}$. 
\item If $\bar w_{(a,b,c)}\in U_g$ then output {\bf the maps are Thurston equivalent} and exit.
\end{itemize}

\item[(IV)] Output {\bf the maps are not Thurston equivalent} and exit.
\end{itemize}
\end{proof}

Consider the following natural generalization of Problem (a), suggested to us by M. Lyubich:

\medskip
\noindent
{\bf Problem (b):} {\sl Given two piecewise linear Thurston mappings $f$ and $g$ with hyperbolic orbifolds, are $f$ and $g$ Thurston equivalent?}

\medskip
\noindent
Algorithmic decidability of Problem (b) presents an interesting direction of further study. 

We find that recognizability problems of combinatorial equivalence of piecewise-linear maps are analogous
to recognizability problems of piecewise-linear manifolds,
which are solvable in dimensions $3$ or less, and become algorithmically intractable in dimensions greater than $4$
(and possibly $4$ as well) -- see, for example, the book of Weinberger \cite{Wein}.
In conclusion, we speculate that natural notions of equivalence of maps in higher dimensions will lead to algorithmically
unsolvable problems -- and a new interplay between Dynamics and Computability.

\appendix
\section{A root-finding algorithm.} 
\label{section-root-finding}
The existence of a root-finding algorithm $A_3$ is a classical result of H. Weyl \cite{Wey}. 
Consider a system of analytic functions $g=\{g_i(z),\; i=1,\ldots,n\}$ defined in a box $D\subset\CC^n$.
  Denote $N(d,D)$ the number of common zeroes of $g$ in $D$, and assume that there are no common zeroes 
  on the boundary.
Then $N(d,D)$ can be computed using the following  multidimensional residue formula (see \cite{Sha} page 324):
\[
N(g,D)=\frac{(n-1)!}{(2\pi i)^{n}}\int_{\partial D}\frac{1}{\vert g \vert^{2n}}.\sum_{j=1}^{n}\overline{g_{j}}dg_{j}\wedge d\overline{g_{j}}\wedge dg_{1}\wedge d\overline{g_{1}} \ldots \overbrace{[j]}\ldots d\overline{g_{r}}\wedge dg_{r},
\] 
where $\overbrace{[j]}$ means that we omit the term $dg_{j}\wedge d\overline{g_{j}}$.

Observe that our space $Rat^{\times}(X^{\star})$ is indeed given as the zeroes of $N$ polynomials in $\CC^{N}$.

Weyl's algorithm to locate all roots of $g=0$ in an isolating neighborhood $G$ works as follows.
Begin by setting $j=0$ covering $G$ by a cubic grid of size $2^{-j}=1$. In each cube $C_k$ of the grid, 
use the residue formula
to check whether there are any zeros in it. 
Since we cannot catch zeros on the boundary of a cube, perform the check for a cube of twice the size --
it is guaranteed to catch any zeros in the closure of $C_k$. 

Throw away all cubes without any zeros. Increment $j\mapsto j+1$ and divide the remaining cubes $C_k$
 into cubes with side $2^{-j}$. Repeat the process, until all zeros are identified with the desired precision.

\newpage
\begin{bibdiv}
\begin{biblist}

\bib{Bin}{article}{
author={R. H. Bing},
title={An alternative proof that 3-manifolds can be triangulated},
journal={Annals of Math. (2)},
volume={69},
pages={37--65},
date={1959},
}

\bib{PilC}{article}{

title={A census of rational maps},
author={E. Brezin},
author={R. Byrne},
author={J. Levy},
author={K. Pilgrim}, 
author={K. Plummer},
journal={Conformal Geometry and Dynamics},
volume={4},
date={2000},
pages={35--74}
}

\bib{DH}{article}{
title={A proof of Thurston's topological characterization of rational functions},
author={A. Douady and J.H. Hubbard},
journal={Acta Math.},
volume={171},
date={1993},
pages={263--297}
}
\bib{Doug}{book}{
title={Alg\`ebre et th\'eories galoisiennes},
author={R. Douady and A. Douady},
publisher={Cassini},
date={2005},
}
\bib{FM}{book}{
title={A Primer on mapping class groups},
author={B. Farb and D. Margalit},
}
\bib{HS}{article}{
author={J.H. Hubbard and D. Schleicher},
title={The Spider Algorithm}
}
\bib{Kam}{article}{
author={A. Kameyama}, 
title={The Thurston equivalence for postcritically finite branched coverings},
journal={Osaka J. Math.},
volume={38},
date={2001},
pages={565--610}

}

\bib{Lad}
{article}{
author={Y. Ladegaillerie},
title={Classes d'isotopie de plongements de 1-complexes dans les surfaces},
journal={Topology},
volume={23},
date={1984},
pages={303--311}
}

\bib{Lic}
{article}{
author={W. B. R. Lickorish},
title={A finite set of generators for the homeotopy group of a 2-manifold}, 
journal={Proc. Cambridge Philos. Soc.},
volume={60},
date={1964}, 
pages={769--778}
}

\bib{MT}{article}{
author={J. Milnor and W. Thurston},
title={On iterated maps of the interval}
}

\bib{Nek}{book}{
title={Self-similar groups},
author={V. Nekrashevych},
publisher={American Mathematical Society},
volume={117},
date={2005},
}
\bib{Pil}{book}{
title={Combinations of complex dynamical systems},
author={K. Pilgrim},
publisher={Springer},
series={Lecture Notes in Mathematics},
volume={1827},
date={2003},
}

\bib{Pil2}{article}{
author={K. Pilgrim}, 
title={An algebraic formulation of Thurston's combinatorial equivalence},
journal={Proc. Amer. Math. Soc.},
volume={131},
date={2003},
pages={3527--3534}
}
\bib{Rad}{article}{
author={T. Rad\'o},
title={Uber den Begriff der Riemannschen Fl\"achen.},
journal={Acta Litt. Sci. Szeged},
pages={101--121},
date={1925},
}

\bib{Sha}{book}{
title={Introduction \`a l'analyse complexe},
author={B. Shabat},
publisher={Mir},
date={1990},
}

\bib{Sil}{book}{
title={The arithmetic of dynamical systems},
author={J. H. Silverman},
publisher={Springer},
volume={241},
date={2007},
}

\bib{thu}{book}{
author={W. P. Thurston},
title={Three-dimensional Geometry and Topology},
publisher={Princeton University Press},
date={1997},
}
\bib{Wein}{book}{
author={S. Weinberger}, 
title={Computers, rigidity, and moduli. The large-scale fractal geometry of Riemannian moduli space. M. B. Porter Lectures},
publisher={Princeton University Press, Princeton, NJ},
date={2005},
}
\bib{Wey}{article}{
author={H. Weyl},
title={Randbemerkungen zu Hauptproblemen der Mathematik, II, Fundamentalsatz der Algebra
and Grundlagen der Mathematik},
journal={Math. Z.}, 
volume={20},
date={1924},
pages={131--151},
}

\end{biblist}
\end{bibdiv}

\end{document}